\documentclass[oneside,english,reqno]{amsart}
\usepackage[T1]{fontenc}
\usepackage[utf8]{inputenc}
\usepackage{geometry}
\usepackage{amssymb}
\usepackage{mathtools}
\usepackage{csquotes}
\usepackage[backend=biber,sorting=none,doi=false,isbn=false,url=false]{biblatex}
\usepackage[hidelinks]{hyperref}
\hypersetup{pdftitle={Convergence of the free energy of vector spin glasses with non-convex interactions}}

\numberwithin{equation}{section}
\theoremstyle{plain}
\newtheorem{thm}{Theorem}[section]
\newtheorem{prop}[thm]{Proposition}
\newtheorem{lem}[thm]{Lemma}
\newtheorem{cor}[thm]{Corollary}
\theoremstyle{remark}
\newtheorem{rem}[thm]{Remark}

\newcommand{\E}{\mathbb{E}}
\newcommand{\R}{\mathbb{R}}
\newcommand{\sfp}{\mathsf{p}}
\newcommand{\sfq}{\mathsf{q}}
\newcommand{\overlap}{\mathsf{R}}
\newcommand{\dd}{\mathop{}\!\mathrm{d}}
\DeclareMathOperator{\supp}{supp}
\DeclareMathOperator{\tr}{tr}
\DeclareMathOperator{\Var}{Var}
\DeclareMathOperator{\Cov}{Cov}
\allowdisplaybreaks

\title[Convergence of vector-spin free energies]{Convergence of the free energy of vector spin glasses with non-convex interactions}
\author{Fu-Hsuan Ho}
\address{Department of Mathematics, Weizmann Institute of Science, 76100 Rehovot, Israel}
\email{fu-hsuan.ho@weizmann.ac.il}
\date{}

\begin{document}
\begin{abstract}
We prove that the enriched free energy of vector spin glasses converges
as the number of sites tends to infinity. The interaction is not assumed
to be convex. This is the convergence assumed by Chen and Mourrat (Probab.\ Math.\ Phys., 2025) in
their representation of the limit as a critical point representation, 
so that representation holds without further assumptions.
\end{abstract}
\maketitle

\section{Introduction}
\label{sec:intro}

We study the enriched free energy of vector spin glasses without assuming
convexity of the interaction. Our aim is to prove the convergence required
in the critical-point representation of Chen and Mourrat
\cite{ChenMourrat2025}.

Fix integers $D,N\geq1$. Let $P_1$ be a compactly supported probability
measure on $\R^D$, and set $P_N=P_1^{\otimes N}$. We assume that
$\xi:\R^{D\times D}\to\R$ satisfies $\xi(0)=0$ and admits an absolutely
convergent power-series expansion on $\R^{D\times D}$. The Hamiltonian
$(H_N(\sigma))_{\sigma\in\R^{D\times N}}$ of the vector spin glass model
is a centered Gaussian process with covariance
\begin{equation}
  \E\bigl[H_N(\sigma)H_N(\tau)\bigr]
  =N\xi\left(N^{-1}\sigma\tau^\intercal\right).
  \label{eq:covariance}
\end{equation}

Denote by $S^D$ and $S^D_+$ the spaces of symmetric and positive
semidefinite $D\times D$ matrices. On $\R^{D\times D}$, and more
generally on $\R^{D\times N}$, we use the Frobenius inner product
$a\cdot b=\tr(ab^\intercal)$ and norm $|a|=(a\cdot a)^{1/2}$. We write
$\|a\|_{\mathrm{op}}$ for the operator norm, and $a\preceq b$ if
$b-a$ is positive semidefinite.

Let $\mathcal Q$ be the set of right-continuous paths
$\sfq:[0,1)\to S^D_+$ that are increasing for $\preceq$, and set
$\mathcal Q_r=\mathcal Q\cap L^r([0,1];S^D)$ for $1\leq r\leq\infty$.
The norm $|\sfq|_{L^r}$ is taken with respect to $|\cdot|$. For
$\sfq\in\mathcal Q_\infty$, write $\sfq(1)=\lim_{u\uparrow1}\sfq(u)$.

For $t\geq0$ and $\sfq\in\mathcal Q_\infty$, define the enriched
Hamiltonian and the enriched free energy by
\begin{align}
  H_N^{t,\sfq}(\sigma,\alpha)
  &=\sqrt{2t}\,H_N(\sigma)
    -tN\xi\left(N^{-1}\sigma\sigma^\intercal\right)
    +\sqrt2\,W_N^{\sfq}(\alpha)\cdot\sigma
    -\sfq(1)\cdot\sigma\sigma^\intercal,
    \label{eq:corrected-hamiltonian}\\
  \bar F_N(t,\sfq)
  &=-\frac1N\E\log
      \iint_{\R^{D\times N}\times\mathfrak U}
    \exp\bigl(H_N^{t,\sfq}(\sigma,\alpha)\bigr)
    \dd P_N(\sigma)\dd\mathfrak R(\alpha).
    \label{eq:free-energy}
\end{align}
Here $\mathfrak R$ is the Ruelle probability cascade whose overlap
distribution is uniform on $[0,1]$, realized as a random probability
measure on the unit sphere of a separable Hilbert space $\mathfrak H$.
We write $\mathfrak U=\supp\mathfrak R$ and
$\alpha\wedge\alpha'=\langle\alpha,\alpha'\rangle_{\mathfrak H}$.
Conditionally on $\mathfrak R$, the columns of $W_N^{\sfq}$ are
independent centered Gaussian processes indexed by $\mathfrak U$ with
covariance $\sfq(\alpha\wedge\alpha')$, and they are independent of
$H_N$. By \eqref{eq:path-continuity} below, $\bar F_N(t,\cdot)$ is
Lipschitz for the $L^1$ norm on $\mathcal Q_\infty$, and it extends by
continuity to $\mathcal Q_1$.

We now state the main result of the present paper.

\begin{thm}
\label{thm:convergence}
There exists a function $f:\R_+\times\mathcal Q_1\to\R$
such that for every $t\geq 0$ and $\sfq\in\mathcal Q_1$,
\begin{equation}
  \lim_{N\to\infty}\bar F_N(t,\sfq)=f(t,\sfq).
  \label{eq:convergence}
\end{equation}
\end{thm}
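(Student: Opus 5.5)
The plan is to identify every subsequential limit of $\bar F_N$ as the unique viscosity solution of a Hamilton--Jacobi equation on $\R_+\times\mathcal Q_1$. Schematically, the equation is
\begin{equation*}
  \partial_t f-\int_0^1\xi(\partial_{\sfq}f)\,\dd u=0,
\end{equation*}
with initial datum $f(0,\cdot)$ given explicitly. Uniqueness then forces the whole sequence to converge. Convexity of $\xi$ is not used in the comparison principle. Where convexity usually enters, namely in passing from $\E\langle\xi(\overlap)\rangle$ to $\xi$ applied to the mean overlap, I will instead use Panchenko-type synchronization obtained from a small perturbation of the Hamiltonian.

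\textbf{Step 1 (compactness).} Using \eqref{eq:path-continuity} and the corresponding Gaussian-integration bound in $t$, I obtain the following:
\begin{itemize}
\item $\bar F_N$ is Lipschitz in $t$ and in $\sfq$ for the $L^1$ norm, uniformly in $N$;
\item $\bar F_N$ is bounded on bounded sets.
\end{itemize}
A diagonal Arzel\`a--Ascoli argument over a countable dense family of paths (for instance, step paths with rational data), followed by extension through the $L^1$-Lipschitz bound, shows that every subsequence has a further subsequence converging locally uniformly to some $f$.

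\textbf{Step 2 (initial condition).} At $t=0$ the sites decouple, and $\bar F_N(0,\sfq)=\bar F_1(0,\sfq)$ for every $N$. Every subsequential limit therefore has the same initial datum $\psi(\sfq)=\bar F_1(0,\sfq)$.

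\textbf{Step 3 (each subsequential limit is a viscosity solution).} This is the main step, and I expect it to be the main obstacle.

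I first add to $H_N^{t,\sfq}$ the standard Ghirlanda--Guerra perturbation family together with Panchenko's synchronizing perturbation for vector spins. Its coefficients are of order $N^{-\gamma}$, so by the same Lipschitz estimates it changes $\bar F_N$ by $o(1)$.

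Gaussian integration by parts gives the following derivative formulas:
\begin{itemize}
\item $\partial_t\bar F_N=\E\langle\xi(\overlap_{11})-\xi(\overlap_{12})\rangle$, after the self-overlap correction has removed the diagonal term;
\item $\partial_{\sfq}\bar F_N$ is, in the appropriate $L^2$ sense, the path obtained from the quantile of the overlap $\overlap_{12}$ against $\alpha\wedge\alpha'$.
\end{itemize}

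Now let a smooth test function $\phi$ touch the limit $f$ from above (respectively below) at $(t_0,\sfq_0)$. By the standard perturbation of the contact point I obtain nearby points $(t_N,\sfq_N)$ where $\bar F_N-\phi$ is nearly extremal. At these points the first-order derivatives of $\bar F_N$ are close to those of $\phi$, in the one-sided sense that viscosity theory requires.

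Under the Ghirlanda--Guerra identities and synchronization, the overlap matrix $\overlap_{12}$ becomes asymptotically a deterministic increasing function of the scalar overlap $\alpha\wedge\alpha'$. Its law is therefore encoded by a single path $\mathsf p$. Consequently $\E\langle\xi(\overlap_{12})\rangle\to\int_0^1\xi(\mathsf p(u))\,\dd u$ and $\partial_{\sfq}\bar F_N\to\mathsf p$.

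The delicate point is to make this pass to the limit while $\phi$ only controls $\bar F_N$ one-sidedly. My first attempt uses semiconcavity of $\bar F_N$ in $\sfq$. This semiconcavity holds because $\bar F_N$ is a log-partition function in the Gaussian field $W^{\sfq}$: it is concave in the appropriate variables. Semiconcavity upgrades the touching-from-above information into convergence of the gradients, after which the equation holds at $(t_0,\sfq_0)$. For touching from below, concavity-type arguments are unavailable. I would instead average over a small ball of perturbation parameters, so that the synchronization identities hold at the chosen $(t_N,\sfq_N)$ with high probability.

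\textbf{Step 4 (uniqueness).} I invoke the comparison principle for viscosity solutions of this Hamilton--Jacobi equation on the cone $\mathcal Q_1$, in the form used by Chen and Mourrat \cite{ChenMourrat2025} for locally Lipschitz nonlinearities. Any two subsequential limits share the initial datum $\psi$, so they coincide. Call the common limit $f$; then \eqref{eq:convergence} holds for every $t\geq 0$ and every $\sfq\in\mathcal Q_1$.
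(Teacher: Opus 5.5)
\textbf{There is a genuine gap in Step~3.} It is not a technicality. Showing that every subsequential limit is a viscosity solution of $\partial_t f-\int_0^1\xi(\partial_{\sfq}f)\dd u=0$ for nonconvex $\xi$ is an open problem. If your argument worked, it would identify the limit for every vector model with compactly supported $P_1$. The paper only obtains the critical-point representation of Corollary~\ref{cor:critical-point}, and notes that the Hamilton--Jacobi route has identified the limit only for special multi-species models.

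Gradient convergence, which is what your semiconcavity remark addresses, is not what is missing. Since $\bar F_N$ is smooth, the first-order conditions hold automatically at near-extremal points of $\bar F_N-\phi$. What is missing is that, at those specific points, the error
\[
 \E\langle\xi(\overlap_{12})\rangle
 -\int_0^1\xi\bigl(\E\langle\overlap_{12}\,|\,\alpha^1\wedge\alpha^2=u\rangle\bigr)\dd u
\]
tends to zero. For nonconvex $\xi$ this error has no sign, so you need two-sided control, that is, concentration of $\overlap_{12}$ given $\alpha^1\wedge\alpha^2$. Ghirlanda--Guerra identities and synchronization give this only on average over the perturbation parameters $\lambda$.
\begin{itemize}
\item Without averaging the free energy over $\lambda$, the identities hold for most $\lambda$ at each fixed $(t,\sfq)$, but not at the $\lambda$-dependent touching points.
\item With averaging, you create a Jensen gap
\[
 \int\Bigl(\int_0^1\xi\bigl(\partial_{\sfq}\bar F_N^\lambda\bigr)\dd u\Bigr)\dd\lambda
 -\int_0^1\xi\Bigl(\int\partial_{\sfq}\bar F_N^\lambda\dd\lambda\Bigr)\dd u .
\]
This gap again has no sign. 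It is negligible only where $\partial_{\sfq}\bar F_N^\lambda$ becomes insensitive to $\lambda$, essentially at points where the limit is differentiable.
\end{itemize}
Either way, you obtain the equation only almost everywhere. Lipschitz almost-everywhere solutions are not unique: for $\partial_tu+(\partial_xu)^2=0$ with $u(0,\cdot)=0$, both $0$ and $\min(0,|x|-t)$ qualify. So the comparison principle in Step~4 has nothing to act on. The kinks of the limit, where the viscosity inequalities carry information, are exactly where nonconvexity bites.

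\textbf{How the paper avoids this.} The paper never characterizes the limit.
\begin{itemize}
\item It proves the Cauchy property directly, via $|\bar F_{kn}-\bar F_n|\leq Cn^{-1/4}$ for finitely supported $P_1$ and bounded paths. It then passes to general $P_1$ and $\sfq\in\mathcal Q_1$ by discretization, rescaling and $L^1$-continuity.
\item The comparison interpolates between size $kn$ and $k$ independent blocks of size $n$, restricted to balanced configurations. The restriction costs $O(\log(n+1)/\sqrt n)$ at both ends.
\item The $\theta$-derivative is $t\E\langle\xi(\bar\overlap)-k^{-1}\sum_\ell\xi(\overlap^{(\ell)})\rangle$. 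It is bounded by $\sup_{|a|\leq1}\|\nabla^2\xi(a)\|_{\mathrm{op}}$ times $\E\langle|\overlap^{(1)}-\overlap^{(2)}|^2\rangle$.
\item Nonconvexity is therefore harmless. One needs only that two blocks have nearly equal overlaps, not that the overlap concentrates given the cascade.
\item The block-overlap bound comes from the energy estimate on the Hessian of the Parisi solution, plus a Brascamp--Lieb argument that uses joint convexity of $\Phi$ in field and disorder and the block-exchange symmetry of balanced configurations.
\end{itemize}
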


Theorem~\ref{thm:convergence} is the convergence assumption preceding
Theorem~1.2 in \cite{ChenMourrat2025}. To state its consequence, we
recall some notation. Taking $N=1$ and $t=0$ in \eqref{eq:free-energy},
set
\[
  \psi(\sfq)=\bar F_1(0,\sfq),
  \qquad
  \mathcal J_{t,\sfq}(\sfq',\sfp)
  =\psi(\sfq')+\langle\sfp,\sfq-\sfq'\rangle_{L^2}
    +t\int_0^1\xi(\sfp(u))\dd u.
\]

\begin{cor}
\label{cor:critical-point}
For every $t\geq0$ and $\sfq\in\mathcal Q_2$, there exist
$\sfq'\in\mathcal Q_2$ and $\sfp\in\mathcal Q_\infty$ such that
\begin{equation}
  \sfp=\partial_{\sfq}\psi(\sfq'),\qquad
  \sfq'=\sfq+t\nabla\xi(\sfp),\qquad
  f(t,\sfq)=\mathcal J_{t,\sfq}(\sfq',\sfp).
  \label{eq:critical-point}
\end{equation}
The path $\sfq'-\sfq$ is bounded. If $\sfq$ is bounded, so is $\sfq'$.
\end{cor}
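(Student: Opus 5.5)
The corollary should follow by feeding Theorem~\ref{thm:convergence} into Theorem~1.2 of \cite{ChenMourrat2025}. That result establishes the critical point representation \eqref{eq:critical-point} under the standing assumption that $\bar F_N(t,\sfq)$ converges pointwise to some limit $f(t,\sfq)$. The plan is therefore to verify that the setting of \cite{ChenMourrat2025} matches ours, and then to read off the conclusions.

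\textbf{Step 1: matching the setting.} I would check the hypotheses on $P_1$ and on $\xi$, and the definition of $\bar F_N$, against \cite{ChenMourrat2025}:
\begin{itemize}
\item[] $P_1$ is compactly supported on $\R^D$;
\item[] $\xi(0)=0$ and $\xi$ has an absolutely convergent power series;
\item[] $\bar F_N$ is built with the correction terms $-tN\xi(N^{-1}\sigma\sigma^\intercal)$ and $-\sfq(1)\cdot\sigma\sigma^\intercal$ and the Ruelle cascade with uniform overlap distribution.
\end{itemize}
I would also check that $\bar F_N(t,\cdot)$ is extended to $\mathcal Q_1$ by $L^1$-Lipschitz continuity, as in \eqref{eq:path-continuity}, exactly as they do. Any differences in sign or normalization conventions would have to be reconciled here, for example the minus sign in front of $\frac1N\E\log$ and the factor $\sqrt2$.

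\textbf{Step 2: supplying the hypothesis.} Theorem~\ref{thm:convergence} gives, for every $t\ge0$ and $\sfq\in\mathcal Q_1$, and hence in particular for $\sfq\in\mathcal Q_2\subset\mathcal Q_1$, the convergence $\bar F_N(t,\sfq)\to f(t,\sfq)$. This is precisely the convergence assumption stated before Theorem~1.2 of \cite{ChenMourrat2025}. In that paper, this assumption is the only ingredient not proved in general for non-convex $\xi$. Their argument uses it to identify $f$ as a viscosity solution of the Hamilton--Jacobi equation $\partial_t f-\int\xi(\partial_\sfq f)=0$ with initial condition $\psi$. From there they extract, through the characteristics of that equation, a pair $(\sfq',\sfp)$ satisfying \eqref{eq:critical-point}.

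\textbf{Step 3: the boundedness statements.} These also come from \cite{ChenMourrat2025}. Since $P_1$ has compact support, $\partial_\sfq\psi$ takes values in a bounded set, so $\sfp\in\mathcal Q_\infty$. Because $\nabla\xi$ is continuous, it follows that $\sfq'-\sfq=t\nabla\xi(\sfp)$ is bounded. If $\sfq$ is bounded, then so is $\sfq'$.

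\textbf{Main obstacle.} I expect the only real difficulty to be the bookkeeping in Step 1. One must make sure that the form of convergence assumed in \cite{ChenMourrat2025} (pointwise on $\R_+\times\mathcal Q_1$, or on some subclass of paths) is literally what Theorem~\ref{thm:convergence} delivers. If their assumption is stated on a different domain, for instance on $\mathcal Q_2$ or only for piecewise constant paths, I would reduce to it using the $L^1$-Lipschitz continuity \eqref{eq:path-continuity} of $\bar F_N$ uniformly in $N$, which transfers convergence between dense classes of paths.
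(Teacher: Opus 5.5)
Your proposal is correct and follows essentially the same route as the paper: Theorem~\ref{thm:convergence} supplies the convergence hypothesis of \cite[Theorem~1.2]{ChenMourrat2025}, which (with Proposition~7.3 there) yields the pair $(\sfq',\sfp)$, and boundedness of $\sfq'-\sfq=t\nabla\xi(\sfp)$ follows from boundedness of $\sfp$ and continuity of $\nabla\xi$. The only additional detail in the paper is an explicit check that $\sfq'$ is increasing, hence in $\mathcal Q_2$, using that $\nabla\xi$ is increasing on $S^D_+$; you instead take $\sfq'\in\mathcal Q_2$ directly from the cited theorem.
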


\begin{proof}
By Theorem~\ref{thm:convergence}, the hypothesis of
\cite[Theorem~1.2]{ChenMourrat2025} holds. Proposition~7.3 of
\cite{ChenMourrat2025}, together with the proof of Theorem~1.2 there,
gives a bounded path $\sfp\in\mathcal Q_\infty$ such that, with
$\sfq'=\sfq+t\nabla\xi(\sfp)$, the first and last identities in
\eqref{eq:critical-point} hold. Since $\nabla\xi$ is increasing on
$S^D_+$ for every $\xi$ as in \eqref{eq:covariance}
\cite{ChenMourrat2025}, the path $\sfq'$ is increasing. Since $\sfp$ is
bounded and $\nabla\xi$ is continuous, the path $\sfq'-\sfq$ is
bounded. Hence $\sfq'\in\mathcal Q_2$, and $\sfq'$ is bounded whenever
$\sfq$ is.
\end{proof}

\subsection{Related work}
\label{sec:related-work}

When $\xi$ is not convex, Chen and Mourrat showed that the limit free
energy, if it exists, is a critical value of the functional
$\mathcal J_{t,\sfq}$ \cite{ChenMourrat2025}; Theorem~\ref{thm:convergence}
removes this assumption for vector spins with a compactly supported
reference measure. Using the Hamilton--Jacobi approach, the limit free
energy of non-convex models has recently been identified for multi-species
models with centered $\pm1$ spins \cite{ChenIssaMourrat2026} and for
multi-species spherical models \cite{ChenMourrat2026spherical}.

Our argument does not identify the limit beyond
Corollary~\ref{cor:critical-point}, but it applies to general vector
reference measures with compact support and gives a rate when the
support is finite.

\subsection*{Organization}

In Section~\ref{sec:reduction}, we show by rescaling and discretization
that it suffices to consider reference measures with finite support in
the unit ball and bounded paths. For such models, we prove that the
free energies of sizes $n$ and $kn$ are close, with an explicit rate.
In Section~\ref{sec:overlap-reduction}, we introduce the interpolation
between the model of size $kn$ and $k$ blocks with independent Hamiltonians of size $n$,
restricted to balanced configurations. We show that the restriction
costs little at both ends of the interpolation. We then recall the
Parisi equation and its diffusion, and use them to reduce the
comparison to a bound on the difference between the overlaps of two
blocks. In Section~\ref{sec:overlap-proof}, we prove this bound. The
fluctuations of the block overlaps are controlled by the Hessian of the
Parisi solution. Their conditional means are controlled by the
Brascamp--Lieb argument of Aronow and Lopatto \cite{AronowLopatto2026}, combined with the
exchange of two blocks of a balanced configuration.

\subsection*{Acknowledgments}

A large language model was used to understand the literature and to
develop the proof strategy. In particular, it was used to understand
Aronow and Lopatto's recent work
\cite{AronowLopatto2026}. 

FHH acknowledges support from the ISF grant (No.~2055/21) and the ERC
grant (No.~101165541, Horizon Europe). FHH also thanks Michael
Hofstetter and Justin Ko for stimulating discussions.

\section{Reduction to a Cauchy sequence}
\label{sec:reduction}

We prove the convergence by showing that, for each $(t,\sfq)$, the
sequence $(\bar F_N(t,\sfq))_{N\geq1}$ is Cauchy. We first consider a
reference measure with finite support in the unit ball and a bounded path. The main
estimate is the following proposition.

\begin{prop}
\label{prop:rate}
Suppose that $P_1$ has finite support in the unit ball. For every
$T,Q<\infty$, there exists a constant $C=C_{P_1,D,\xi,T,Q}>0$ such that
for all $n\geq1$ and $k\geq2$,
\begin{equation}
  \sup_{\substack{0\leq t\leq T,\ \sfq\in\mathcal Q_\infty\\
                   |\sfq|_{L^\infty}\leq Q}}
  \bigl|\bar F_{kn}(t,\sfq)-\bar F_n(t,\sfq)\bigr|
  \leq Cn^{-1/4}.
  \label{eq:rate}
\end{equation}
\end{prop}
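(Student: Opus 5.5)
We follow the route announced in the Organization paragraph: interpolate between the model of size $kn$ and $k$ independent copies of the size-$n$ model. Write $\sigma=(\sigma^1,\dots,\sigma^k)\in(\R^{D\times n})^k$. Since $P_{kn}=P_n^{\otimes k}$, the measure factorises. For $s\in[0,1]$ we consider the interpolating Hamiltonian
\[
\sqrt{2ts}\,H_{kn}(\sigma)+\sqrt{2t(1-s)}\sum_{j=1}^k H_n^{(j)}(\sigma^j),
\]
with the matching self-interaction corrections and the same field $\sqrt2 W^{\sfq}_{kn}(\alpha)\cdot\sigma-\sfq(1)\cdot\sigma\sigma^\intercal$. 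Here the $H_n^{(j)}$ are independent copies. At $s=0$ the blocks share $\alpha$, so they are not independent. We use instead that the enriched free energy with the RPC field is exactly the Parisi-type functional, so that the $s=0$ endpoint should equal $\bar F_n(t,\sfq)$ up to the restriction error discussed next.

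We first restrict $P_{kn}$ to \emph{balanced} configurations, namely those whose empirical measures of columns in each block are all equal (or nearly equal). Because $P_1$ has finite support of size $m$, balanced types cost at most $O(m\log n/n)$ in free energy by counting types. Under this restriction we have $\sigma\sigma^\intercal=\sum_j\sigma^j\sigma^{j\intercal}$ with each block self-overlap identical. The self-overlap correction terms then match, and the cost of restriction at both ends is $O(\log n/n)$ or $O(n^{-1/2})$.

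Gaussian interpolation gives
\[
\partial_s \bar F\approx t\,\E\Bigl\langle \xi(R)-\tfrac1k\sum_j \xi(R^j)\Bigr\rangle,
\]
evaluated on two replicas. Here $R=\frac1{kn}\sigma\sigma'^\intercal=\frac1k\sum_j R^j$ and $R^j$ is the block overlap. Since $\xi$ is not convex, there is no sign. Instead we bound the quantity by a Taylor expansion, using that $|R^j|\le1$ since the spins lie in the unit ball:
\[
\Bigl|\xi\Bigl(\tfrac1k\sum_j R^j\Bigr)-\tfrac1k\sum_j\xi(R^j)\Bigr|\le C\,\frac1{k^2}\sum_{i,j}|R^i-R^j|^2 .
\]
So it suffices to show that $\E\langle|R^1-R^2|^2\rangle\le Cn^{-1/2}$ uniformly in $s$, $t\le T$ and $|\sfq|_{L^\infty}\le Q$. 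This gives the $n^{-1/4}$ rate after a square root, if only a first-moment bound $\E\langle|R^1-R^2|\rangle\lesssim n^{-1/4}$ is obtained.

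The key step, and the main obstacle, is this block-overlap estimate. We split $R^1-R^2$ into its fluctuation around the conditional mean given the RPC variables $\alpha,\alpha'$ and that conditional mean itself. For the fluctuations, we use the Parisi PDE representation of the $\alpha$-integral: each site contributes independently through the Parisi solution $\Phi$, whose Hessian is bounded. This gives variance $O(1/n)$ per block. For the conditional means, blocks $1$ and $2$ are exchangeable under the balanced restriction, because swapping them preserves the measure and the Hamiltonian law. We then apply the Brascamp--Lieb argument of Aronow--Lopatto to the log-concave-like Gaussian part to obtain concentration of the mean overlap of order $n^{-1/2}$. We expect difficulty in making the Brascamp--Lieb step work without convexity of $\xi$. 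The first thing we would try is to apply it only to the linear Gaussian field $W$, with $H$ conditioned on, so that convexity is needed only in the Gaussian directions.
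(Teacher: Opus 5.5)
Your outline matches the paper's architecture: balanced restriction, interpolation to $k$ independent blocks, a Taylor bound reducing the $\theta$-derivative to $\E\langle|\overlap^{(1)}-\overlap^{(2)}|^2\rangle$, a fluctuation/mean split, and block exchange plus Brascamp--Lieb. However, the central estimate you aim for is false as stated, and the idea that makes a version of it true is missing.

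\textbf{The block-overlap bound needs a perturbation.} You want $\E\langle|\overlap^{(1)}-\overlap^{(2)}|^2\rangle\le Cn^{-1/2}$ uniformly over $|\sfq|_{L^\infty}\le Q$. Take $\sfq\equiv0$ and $\theta=0$.
\begin{itemize}
\item The blocks then interact only through the balance constraint. Given the types of the two replicas and the disorder, $\overlap^{(1)}$ and $\overlap^{(2)}$ are independent.
\item In a replica-symmetry-breaking regime (e.g.\ SK at low temperature) each has order-one conditional variance, so the left side is of order one.
\end{itemize}
Your fluctuation heuristic (sites contribute independently through $\Phi$, bounded Hessian, variance $O(1/n)$) fails for the same reason. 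The terminal condition couples sites through $H_{\theta,k,n}$. A bounded $K_s=\nabla_x^2\Phi$ does not control $v^\intercal K_s v$ for the spread-out vectors $v$, $|v|^2\sim1/n$, that define block overlaps, since $\|\Pi K_s\Pi\|_{\mathrm{op}}$ can be of order $n$.

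The paper first replaces $\sfq$ by $\sfq^\varepsilon=\sfq+\varepsilon(1+s)I_D$, at cost $3\varepsilon$ by Lemma~\ref{lem:restricted-continuity}.
\begin{itemize}
\item Since $\dot\sfq^\varepsilon\succeq\varepsilon I_D$, It\^o's isometry for the martingale $m_s=\nabla_x\Phi(s,X_s)$ gives $\int_0^1\E\tr(\Pi K_s^2)\dd s\le n/\varepsilon$.
\item Combined with $sV_s\preceq K_s$, this yields the fluctuation bound $4D(n\varepsilon\delta)^{-1/2}$ and also drives the mean bound.
\item The total is $C(n\varepsilon)^{-1/3}$. The rate $n^{-1/4}$ comes from balancing this against $3\varepsilon$, not from taking a square root of a first moment.
\end{itemize}

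\textbf{The mean part needs conditional centering.} Block exchangeability only gives unconditional centering. Brascamp--Lieb bounds a variance under the law conditioned on the terminal configuration $\sigma=\tau$. So the paper needs $\E_\theta[U(r)\,|\,\sigma=\tau]=0$ for $U(r)=-\frac1{2n}(\sigma-m_r)^\intercal T_a(\sigma-m_r)$, where $T_a$ acts as $a$ on block one and as $-a$ on block two.
\begin{itemize}
\item It gets this from the permutation $\pi$ exchanging matching spins of the first two blocks of the fixed balanced $\tau$, so that $\pi\tau=\tau$ and $\pi^\intercal T_a\pi=-T_a$. The permutation is applied simultaneously to $g$, $z_0$ and $B$.
\item This requires exact balance. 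Your ``nearly equal'' option would break it.
\item Your proposed fix of conditioning on $H$ also destroys this centering, because $\pi$ moves $g$.
\end{itemize}

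\textbf{Non-convexity of $\xi$ is not an obstruction.} Since $H_{\theta,k,n}(\sigma;g)$ is linear in $g$, \eqref{eq:terminal-response} makes $\Phi$ jointly convex in $(x,g)$. By Girsanov, the law of $(g,z_0,B)$ given $\sigma=\tau$ has density proportional to $\exp\bigl(\sqrt{2t}\,H_{\theta,k,n}(\tau;g)+X_1\cdot\tau-\int_0^1\Phi(r,X_r;g)\dd r\bigr)$ relative to the Gaussian law. This is a log-concave perturbation, and Brascamp--Lieb applies after a finite-dimensional truncation of $B$.

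\textbf{The endpoint costs do not follow from counting types.}
\begin{itemize}
\item At $\theta=1$, the dominant block-type tuple need not be balanced. The paper instead uses a trial measure that rebalances $O(k\sqrt n)$ sites, with error $\log(n+1)/\sqrt n$.
\item At $\theta=0$, you need additivity of the Parisi PDE over blocks. You also need a supersolution comparison with $s^{-1}\log\sum_L e^{s\Phi_L}$ plus Gaussian concentration. The resulting error is $\sqrt{\log(n+1)/n}$, not $\log n/n$.
\end{itemize}
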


\begin{rem}
\label{rem:rate}
Letting $k\to\infty$ in \eqref{eq:rate} and using
Theorem~\ref{thm:convergence}, we obtain
$|\bar F_n(t,\sfq)-f(t,\sfq)|\leq Cn^{-1/4}$ in the same range of
$(t,\sfq)$. By the rescaling in Section~\ref{sec:proof-main}, the same
bound holds for every reference measure with finite support, with a constant
depending also on the radius of the support.
\end{rem}

We deduce Theorem~\ref{thm:convergence} from Proposition~\ref{prop:rate}
by rescaling and approximation.

\subsection{Proof of Theorem~\ref{thm:convergence}
assuming Proposition~\ref{prop:rate}}
\label{sec:proof-main}

Let $r>0$ be such that $\supp P_1$ is contained in the closed
ball $B(0,r)$ of $\R^D$. Fix $\rho>0$ and a finite $\rho$-net
$\{\tau_1,\ldots,\tau_J\}\subseteq\supp P_1$. Define
$T_\rho(x)=\tau_{j(x)}$ for $x\in\supp P_1$, where $j(x)$ is
the smallest index minimizing $|x-\tau_j|$. The map $T_\rho$ is
measurable and $|T_\rho(x)-x|\leq\rho$. For
$\sigma\in(\supp P_1)^N$, we write $T_\rho\sigma$ for the
configuration obtained by applying $T_\rho$ to each column of
$\sigma$, and we set
\[
 F^\rho_N(t,\sfq)
 =-\frac1N\E\log\iint
 \exp\bigl(H_N^{t,\sfq}(T_\rho\sigma,\alpha)\bigr)
 \dd P_N(\sigma)\dd\mathfrak R(\alpha).
\]
The law of $r^{-1}T_\rho(x)$ under $P_1(\dd x)$ is
\[
 \widetilde P_1
 =\sum_{j=1}^{J}
 P_1\bigl(T_\rho^{-1}(\{\tau_j\})\bigr)
 \delta_{r^{-1}\tau_j},
\]
which has finite support in the unit ball. Since
$W_N^{r^2\sfq}$ has the same law as $r W_N^{\sfq}$, the
quantity $F^\rho_N(t,\sfq)$ is the enriched free energy with reference measure
$\widetilde P_1$, interaction $a\mapsto\xi(r^2a)$, and path
$r^2\sfq$. This interaction also has an absolutely convergent
power series.

We compare $\bar F_N(t,\sfq)$ with $F^\rho_N(t,\sfq)$. Both are
integrals against $P_N\otimes\mathfrak R$; only the configuration in
the Hamiltonian changes. For $\sigma,\tau\in(\supp P_1)^N$, all columns
of $\sigma,\tau,T_\rho\sigma,T_\rho\tau$ have norm at
most $r$. Consequently,
\[
 \left|
 \frac{\sigma\tau^\intercal}{N}
 -\frac{(T_\rho\sigma)(T_\rho\tau)^\intercal}{N}
 \right|
 \leq 2r\rho,
\]
and all these overlap matrices lie in the ball of radius $r^2$
in $\R^{D\times D}$. Conditionally on the cascade, the covariance
kernels of the centered parts of the two Hamiltonians therefore differ
by at most
\[
 4Nr\rho\Bigl(
 t\sup_{|a|\leq r^2}|\nabla\xi(a)|
 +|\sfq|_{L^\infty}
 \Bigr).
\]
In both Hamiltonians, the non-random terms are minus one half of the
variance of the centered part. Along the Gaussian interpolation between
the two Hamiltonians, these terms cancel the diagonal terms, and the
derivative of the free energy is $(2N)^{-1}$ times an average of the
covariance difference at two replicas. Hence
\begin{equation}
 \sup_{N\geq1}
 \bigl|\bar F_N(t,\sfq)-F^\rho_N(t,\sfq)\bigr|
 \leq
 2r\rho
 \Bigl(
 t\sup_{|a|\leq r^2}|\nabla\xi(a)|
 +|\sfq|_{L^\infty}
 \Bigr).
 \label{eq:prior-approximation}
\end{equation}

Fix $\rho>0$. Proposition~\ref{prop:rate}, applied to
$\widetilde P_1$, the interaction $a\mapsto\xi(r^2a)$, and the
path $r^2\sfq$, gives a constant $C_\rho$ such that for
$n,m\geq2$,
\[
 |F^\rho_n(t,\sfq)-F^\rho_m(t,\sfq)|
 \leq|F^\rho_n(t,\sfq)-F^\rho_{nm}(t,\sfq)|
 +|F^\rho_{nm}(t,\sfq)-F^\rho_m(t,\sfq)|
 \leq C_\rho\bigl(n^{-1/4}+m^{-1/4}\bigr).
\]
Thus $(F^\rho_N(t,\sfq))_{N\geq1}$ is Cauchy. The triangle
inequality and \eqref{eq:prior-approximation} give
\[
 \limsup_{n,m\to\infty}
 \bigl|\bar F_n(t,\sfq)-\bar F_m(t,\sfq)\bigr|
 \leq
 4r\rho\Bigl(
 t\sup_{|a|\leq r^2}|\nabla\xi(a)|
 +|\sfq|_{L^\infty}
 \Bigr).
\]
Letting $\rho\downarrow0$ proves the Cauchy property for every
$t\geq0$ and $\sfq\in\mathcal Q_\infty$. The dependence of $C_\rho$
on $\rho$ is harmless, since $\rho$ is fixed before $n,m$ tend to
infinity.

Finally, by \cite[Proposition~5.1]{ChenMourrat2025}, uniformly in $N$,
\begin{equation}
  \bigl|\bar F_N(t,\sfq)-\bar F_N(t,\sfq')\bigr|
  \leq
  r^2|\sfq-\sfq'|_{L^1},
  \qquad \sfq,\sfq'\in\mathcal Q_1.
  \label{eq:path-continuity}
\end{equation}
For $\sfq\in\mathcal Q_1$ and $S \in[0,1)$, set $\sfq^S(s)=\sfq(s\wedge S)$.
Then $\sfq^S$ is bounded by $|\sfq(S)|$, and
$|\sfq-\sfq^S|_{L^1}\to0$ as $S\uparrow1$, since
$(1-S)|\sfq(S)|\leq\int_S^1|\sfq(s)|\dd s$. Convergence for bounded
paths and \eqref{eq:path-continuity} give
\[
\limsup_{n,m\to\infty}\bigl|\bar F_n(t,\sfq)-\bar F_m(t,\sfq)\bigr|
\leq
2r^2|\sfq-\sfq^S|_{L^1}.
\]
Letting $S\uparrow1$ proves the Cauchy property for every $t\geq0$ and
$\sfq\in\mathcal Q_1$.

\section{Reduction to control of an interpolated model}
\label{sec:overlap-reduction}

Throughout this section, $P_1$ has finite support in the unit ball.
Write $\sigma^{(\ell)}$ for the $\ell$-th block of $n$ columns of
$\sigma\in\R^{D\times kn}$. Let $H_n^{(1)},\ldots,H_n^{(k)}$ be
independent copies of $H_n$, independent of $H_{kn}$, and set
\begin{equation}
 H_{\theta,k,n}(\sigma)=\sqrt\theta\,H_{kn}(\sigma)
 +\sqrt{1-\theta}\sum_{\ell=1}^kH_n^{(\ell)}(\sigma^{(\ell)}),
 \qquad 0\leq\theta\leq1.
 \label{eq:interpolation}
\end{equation}
For every configuration, define
\begin{equation}
 H_{\theta,k,n}^{t,\sfq}(\sigma,\alpha)
 =\sqrt{2t}\,H_{\theta,k,n}(\sigma)
 -t\E[H_{\theta,k,n}(\sigma)^2]
 +\sqrt2\,W_{kn}^{\sfq}(\alpha)\cdot\sigma
 -\sfq(1)\cdot\sigma\sigma^\intercal.
\label{eq:restricted-hamiltonian}
\end{equation}
The variance is taken with $\sigma$ fixed:
\[
 \E[H_{\theta,k,n}(\sigma)^2]
 =\theta kn\,\xi\left(\frac{\sigma\sigma^\intercal}{kn}\right)
 +(1-\theta)n\sum_{\ell=1}^k
 \xi\left(\frac{\sigma^{(\ell)}(\sigma^{(\ell)})^\intercal}{n}\right).
\]

For a deterministic Borel set $\Sigma$ with $P_{kn}(\Sigma)>0$, define
\begin{equation}
 \bar F_{\theta,k,n}(t,\sfq;\Sigma)
 =-\frac1{kn}\E\log
 \iint_{\Sigma\times\mathfrak U}
 \exp(H_{\theta,k,n}^{t,\sfq}(\sigma,\alpha))
 \dd P_{kn}(\sigma)\dd\mathfrak R(\alpha).
 \label{eq:restricted-free-energy}
\end{equation}
Write $\bar F_N(t,\sfq;\Sigma)$ for the analogous restriction of
\eqref{eq:free-energy}. The restricted reference measure is not normalized. When
the restriction is omitted, $\Sigma$ is the whole configuration space.

We shall use the balanced set
\[
 \Sigma^{\mathrm{bal}}_{kn}
 =\{\sigma:L_1(\sigma)=\cdots=L_k(\sigma)\},
 \qquad
 L_\ell(\sigma)=\frac1n\sum_{i=(\ell-1)n+1}^{\ell n}
 \delta_{\sigma_{\cdot,i}}.
\]
For a balanced configuration and two of its blocks, matching identical
spins in the two blocks gives a permutation of the sites that exchanges
the two blocks and fixes the configuration. 
This is the symmetry used
in Section~\ref{sec:overlap-proof}. Balance also makes the block
self-overlaps equal to $\sigma\sigma^\intercal/(kn)$, so on this set
the variance correction in \eqref{eq:restricted-hamiltonian} is
$-tkn\,\xi(\sigma\sigma^\intercal/(kn))$ for every $\theta$.

Let $\mathcal L_n$ be the set of empirical distributions of $n$ spins
in $\supp P_1$, and put
\[
 \Sigma_n(L)
 =\left\{\sigma\in(\supp P_1)^n:
       \frac1n\sum_{i=1}^n\delta_{\sigma_{\cdot,i}}=L\right\},
 \qquad
 \Sigma^{\mathrm{bal}}_{kn}(L)=\Sigma_n(L)^k.
\]
On $(\supp P_1)^{kn}$, the balanced set is the disjoint union of the
sets $\Sigma^{\mathrm{bal}}_{kn}(L)$ over $L\in\mathcal L_n$.

We use the following continuity estimate.

\begin{lem}
\label{lem:restricted-continuity}
Let $\Sigma$ be a Borel set with $P_{kn}(\Sigma)>0$, and let
$\sfq,\sfq'\in\mathcal Q_\infty$. Then
\begin{equation}
 |\bar F_{\theta,k,n}(t,\sfq;\Sigma)
   -\bar F_{\theta,k,n}(t,\sfq';\Sigma)|
 \leq\int_0^1\|\sfq(s)-\sfq'(s)\|_{\mathrm{op}}\dd s.
 \label{eq:restricted-path-continuity}
\end{equation}
The same estimate holds for $\bar F_N(t,\cdot\,;\Sigma)$.
\end{lem}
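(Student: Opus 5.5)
The plan is to interpolate linearly between the two paths, $\sfq_s=(1-s)\sfq+s\sfq'$ for $s\in[0,1]$, which stays in $\mathcal Q_\infty$ since both endpoints are increasing and bounded. We differentiate the restricted free energy along $s$ by Gaussian integration by parts, conditionally on the cascade $\mathfrak R$ and on $H_{\theta,k,n}$. The Hamiltonian part $\sqrt{2t}H_{\theta,k,n}-t\E[H_{\theta,k,n}^2]$ does not depend on $s$ and is treated as a fixed random weight on $\Sigma\times\mathfrak U$. It is harmless in the integration by parts because $W^{\sfq}$ is independent of it. The restriction to $\Sigma$ and the lack of normalization only change the Gibbs measure; they add no terms to the derivative.

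Write $\delta=\sfq'-\sfq$ and let $\langle\cdot\rangle_s$ be the Gibbs measure on $\Sigma\times\mathfrak U$ with weight $\exp(H^{t,\sfq_s}_{\theta,k,n})$. The $s$-derivative of the exponent is $\sqrt2\,\partial_sW^{\sfq_s}(\alpha)\cdot\sigma-\delta(1)\cdot\sigma\sigma^\intercal$. The field $\sqrt2 W^{\sfq_s}(\alpha)\cdot\sigma$ has conditional covariance $2\,\sfq_s(\alpha\wedge\alpha')\cdot\sigma\sigma'^\intercal$. Gaussian integration by parts, in the standard form used in \cite[Proposition~5.1]{ChenMourrat2025}, then gives
\[
 \partial_s\bar F_{\theta,k,n}(t,\sfq_s;\Sigma)
 =-\frac1{kn}\E\bigl\langle
 \delta(1)\cdot\sigma\sigma^\intercal-\delta(\alpha\wedge\alpha')\cdot\sigma\sigma'^\intercal
 \bigr\rangle_s
 -\frac1{kn}\E\bigl\langle
 -\delta(1)\cdot\sigma\sigma^\intercal
 \bigr\rangle_s .
\]
The diagonal term coming from the variance cancels exactly against the correction $-\sfq(1)\cdot\sigma\sigma^\intercal$. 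What remains is
\[
\partial_s\bar F=\frac1{kn}\E\langle\delta(\alpha\wedge\alpha')\cdot\sigma\sigma'^\intercal\rangle_s .
\]

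Next we bound this term. We have $|\delta(u)\cdot\sigma\sigma'^\intercal|\le\|\delta(u)\|_{\mathrm{op}}\sum_i|\sigma_{\cdot i}||\sigma'_{\cdot i}|\le kn\|\delta(u)\|_{\mathrm{op}}$, since all spins lie in the unit ball. So $|\partial_s\bar F|\le\E\langle\|\delta(\alpha\wedge\alpha')\|_{\mathrm{op}}\rangle_s$.

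The final step is to convert the Gibbs average into an integral over $[0,1]$. The required fact is that under $\E\langle\cdot\rangle$ the overlap $\alpha\wedge\alpha'$ is uniformly distributed on $[0,1]$, even with the $\sigma$-dependent reweighting. This is the step needing care. It follows from the invariance property of Ruelle cascades: the $\sigma$-integral defines a random function of $\alpha$ built from the Gaussian field $W$, and the cascade's invariance under such reweightings (Bolthausen–Sznitman invariance, as used in \cite{ChenMourrat2025}) preserves the overlap law. If that is awkward with the restriction to $\Sigma$, I would first note that the restriction is just a different, possibly unnormalized, reference measure on $\sigma$. The invariance argument does not use normalization, since normalizing constants cancel in the Gibbs measure.

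With uniformity in hand, $\E\langle\|\delta(\alpha\wedge\alpha')\|_{\mathrm{op}}\rangle_s=\int_0^1\|\delta(u)\|_{\mathrm{op}}\dd u$. Integrating over $s\in[0,1]$ yields \eqref{eq:restricted-path-continuity}. The argument for $\bar F_N(t,\cdot\,;\Sigma)$ is identical with $H_{\theta,k,n}$ replaced by $H_N$.
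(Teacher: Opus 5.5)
Your proposal is correct and follows the same route as the paper. Both arguments interpolate linearly between the paths and apply Gaussian integration by parts conditionally on the Hamiltonian and the cascade, where $-\sfq(1)\cdot\sigma\sigma^\intercal$ cancels the diagonal term. Both then use the bound $|\delta(u)\cdot\sigma\sigma'^\intercal|\leq kn\|\delta(u)\|_{\mathrm{op}}$ and the cascade invariance of \cite[Proposition~4.8]{ChenMourrat2025} to make $\alpha\wedge\alpha'$ uniform under $\E\langle\cdot\rangle$.

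The paper is more explicit on one point. The invariance applies because, conditionally on the Hamiltonian, the logarithm of the spin integral over $\Sigma$ is a Lipschitz function of the field $W(\alpha)$, since the spins lie in the unit ball. That Lipschitz condition is the hypothesis to check, rather than normalization.
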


\begin{proof}
We follow the proof of \cite[Proposition~5.1]{ChenMourrat2025}. For
$\lambda\in[0,1]$, set $\sfq_\lambda=\sfq'+\lambda\kappa$ with
$\kappa=\sfq-\sfq'$, so that $\sfq_\lambda\in\mathcal Q_\infty$.
Conditionally on the cascade, the covariance of
$\sqrt2\,W_{kn}^{\sfq_\lambda}(\alpha)\cdot\sigma$ and
$\sqrt2\,W_{kn}^{\sfq_\lambda}(\alpha')\cdot\sigma'$ has derivative
$2\kappa(\alpha\wedge\alpha')\cdot\sigma\sigma'^\intercal$ in
$\lambda$, and $-\sfq_\lambda(1)\cdot\sigma\sigma^\intercal$ is minus
one half of the variance. 
Let $\langle\cdot\rangle_\lambda$ be the Gibbs measure on
$\Sigma\times\mathfrak U$ with path $\sfq_\lambda$, and
$(\sigma^1,\alpha^1)$, $(\sigma^2,\alpha^2)$ are two independent
replicas. 
Then, the Gaussian integration by parts gives
\[
 \frac{\dd}{\dd\lambda}\bar F_{\theta,k,n}(t,\sfq_\lambda;\Sigma)
 =\frac1{kn}\E\bigl\langle
 \kappa(\alpha^1\wedge\alpha^2)\cdot\sigma^1(\sigma^2)^\intercal
 \bigr\rangle_\lambda.
\]
The columns of $\sigma^1,\sigma^2$ have norm at most $1$, so
the trace norm of $\sigma^1(\sigma^2)^\intercal$ is at most $kn$ and
\[
 \bigl|\kappa(\alpha^1\wedge\alpha^2)\cdot\sigma^1(\sigma^2)^\intercal\bigr|
 \leq kn\,\|\kappa(\alpha^1\wedge\alpha^2)\|_{\mathrm{op}}.
\]
Conditionally on the Hamiltonian, the logarithm of the spin integral
over $\Sigma$ is a Lipschitz function of the field
$W_{kn}^{\sfq_\lambda}(\alpha)$. The invariance of the cascade in
\cite[Proposition~4.8]{ChenMourrat2025} therefore applies, and
$\alpha^1\wedge\alpha^2$ is uniformly distributed on $[0,1]$ under
$\E\langle\cdot\rangle_\lambda$. Integrating in $\lambda$ gives
\eqref{eq:restricted-path-continuity}.
\end{proof}

By Lemma~\ref{lem:restricted-continuity}, the restricted free energies
extend by continuity to $\sfq\in\mathcal Q_1$.

\begin{prop}
\label{prop:remove-constraints}
For every $T,Q<\infty$, there exists
$C=C_{P_1,D,\xi,T,Q}>0$ such that, for all $n\geq1$ and $k\geq2$,
\begin{align}
 \sup_{\substack{0\leq t\leq T,\ \sfq\in\mathcal Q_\infty\\
                  |\sfq|_{L^\infty}\leq Q}}
 |\bar F_{kn}(t,\sfq)-&\bar F_n(t,\sfq)|
 \nonumber \\
 &\quad\leq
 \sup_{\substack{0\leq t\leq T,\ \sfq\in\mathcal Q_\infty\\
                  |\sfq|_{L^\infty}\leq Q}}
 |\bar F_{1,k,n}(t,\sfq;\Sigma^{\mathrm{bal}}_{kn})
  -\bar F_{0,k,n}(t,\sfq;\Sigma^{\mathrm{bal}}_{kn})|
 +C\frac{\log(n+1)}{\sqrt n}.
\label{eq:constraint-reduction}
\end{align}
\end{prop}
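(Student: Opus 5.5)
We write $\mathcal S$ for the supremum on the right of \eqref{eq:constraint-reduction}. The plan is to identify the two ends of the interpolation with the unrestricted free energies up to $O(\log(n+1)/\sqrt n)$, by a triangle inequality
\[
 |\bar F_{kn}-\bar F_n|
 \le|\bar F_{kn}-\bar F_{1,k,n}(\cdot;\Sigma^{\mathrm{bal}}_{kn})|
 +\mathcal S
 +|\bar F_{0,k,n}(\cdot;\Sigma^{\mathrm{bal}}_{kn})-\bar F_n|.
\]
We note that $\bar F_{1,k,n}(t,\sfq;\Sigma)=\bar F_{kn}(t,\sfq;\Sigma)$. This holds because at $\theta=1$ the variance correction is exactly $tkn\,\xi(\sigma\sigma^\intercal/(kn))$.

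\textbf{Endpoint $\theta=1$.} We decompose $\Sigma^{\mathrm{bal}}_{kn}=\bigsqcup_{L\in\mathcal L_n}\Sigma^{\mathrm{bal}}_{kn}(L)$, and similarly $(\supp P_1)^{kn}=\bigsqcup_{M\in\mathcal L_{kn}}\Sigma_{kn}(M)$. The key point is that the Hamiltonian $H_{kn}^{t,\sfq}$ is invariant in law under site permutations, jointly with the cascade field, since the columns of $W_{kn}^{\sfq}$ are i.i.d. Hence, for fixed empirical measure $M$, the free energy restricted to the permutation-invariant set $\Sigma_{kn}(M)$ is an exchangeable average. Restricting further to the balanced part $\Sigma_{kn}(M)\cap\Sigma^{\mathrm{bal}}_{kn}$, which is nonempty only for $M\in\mathcal L_n$, we compare as follows. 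The conditional Gibbs mass of the balanced part inside $\Sigma_{kn}(M)$ is not exactly proportional to $P_{kn}$, so instead we argue by bounds. We have
\[
 \log\int_{\Sigma}\ge\log\int_{\Sigma^{\mathrm{bal}}_{kn}(L)},
\]
and the sum over $L$ has at most $|\mathcal L_{kn}|\le(kn+1)^{J}$ terms, where $J=|\supp P_1|$. Therefore
\[
 \bar F_{kn}\le\min_L\bar F_{kn}(\cdot;\Sigma_{kn}(L))+\tfrac{J\log(kn+1)}{kn}.
\]
It remains to compare $\Sigma_{kn}(M)$ with the balanced set for the best $M$. I would first round $M$ to some $L\in\mathcal L_n$ at cost $O(1/n)$ in total variation, using a Lipschitz-in-configuration bound as in \eqref{eq:prior-approximation}: changing $O(kn\cdot J/n)$ sites changes the overlaps by $O(1/n)$. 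I would then use exchangeability: averaging over uniformly random permutations, the Gibbs measure on $\Sigma_{kn}(L)$ puts fraction at least $P(\text{balanced})\ge\binom{kn}{n,\dots,n}$-type ratio, namely $e^{-CkJ\log n}$, on balanced configurations. Since $\log$ is concave and $\E$ of the Gibbs fraction is this permutation probability by symmetry, Jensen is in the wrong direction. Instead I would use concentration of $\frac1{kn}\log Z$ (Gaussian concentration, fluctuations $O((kn)^{-1/2})$) together with the identity that the expected partition function restricted to the balanced part is exactly the multinomial fraction times that on $\Sigma_{kn}(L)$. This gives the loss $O(J\log n/n)+O(1/\sqrt n)$.

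\textbf{Endpoint $\theta=0$.} Here the model on $\Sigma^{\mathrm{bal}}_{kn}(L)=\Sigma_n(L)^k$ decouples in the Hamiltonian into $k$ independent size-$n$ copies. The copies are, however, coupled through the common cascade field. I would use the standard fact from \cite{ChenMourrat2025}, the cascade invariance or a Guerra-type bound, that the $n$-site block free energies with shared cascade equal $\bar F_n$ per block. The exact identity of the enriched free energy of a product system with the average of the factors follows from the Ruelle cascade recursion (Parisi formula representation for product Hamiltonians with independent columns across blocks). This gives $\bar F_{0,k,n}(\Sigma^{\mathrm{bal}}_{kn}(L))\approx\bar F_n(\Sigma_n(L))$. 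Summing over $L$ again costs $O(J\log(n+1)/n)$, and comparing $\min_L$ to the unrestricted $\bar F_n$ costs the same by the union bound plus concentration.

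The main obstacle is the $\theta=1$ step: restricting the $kn$-system to balanced configurations is only a polynomially small subset. Showing that this costs only $O(\log n/\sqrt n)$ requires the permutation-symmetry argument combined with concentration, and handling the shared cascade field when passing from expectation of partition functions to expectation of logs.
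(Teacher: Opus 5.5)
\textbf{The $\theta=0$ endpoint has a genuine gap.} Your triangle inequality matches the paper's. Grant the block additivity $\bar F_{0,k,n}(t,\sfq;\Sigma^{\mathrm{bal}}_{kn}(L))=\bar F_n(t,\sfq;\Sigma_n(L))$; with monotonicity under restriction it gives $\bar F_n\le\bar F_{0,k,n}(t,\sfq;\Sigma^{\mathrm{bal}}_{kn})\le\min_L\bar F_n(t,\sfq;\Sigma_n(L))$. Everything then rests on $\min_{L\in\mathcal L_n}\bar F_n(t,\sfq;\Sigma_n(L))\le\bar F_n(t,\sfq)+o(1)$, that is, on exchanging $\E$ and $\max_L$ in $\E\log\sum_L Z_{n,L}$.

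``Union bound plus concentration'' does not give this, because the expectation includes the Ruelle cascade, which is not a Gaussian object. Gaussian concentration is available only conditionally on $\mathfrak R$. It then produces a maximizing type $L(\mathfrak R)$ that depends on the cascade. The additivity identity, however, holds only for deterministic $L$ and only after averaging over $\mathfrak R$, since conditionally on $\mathfrak R$ the $k$ blocks stay coupled through the common $\alpha$. You name the shared cascade as the obstacle but leave it open, and that is exactly the missing idea.

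The paper resolves it with the Parisi equation:
\begin{itemize}
\item $\widehat\Phi(s)=s^{-1}\log\sum_L e^{s\Phi_L(s)}$ is a supersolution. The factor $s$ cancels the quadratic gradient terms and leaves $s^{-2}\sum_L p_L\log p_L\le 0$.
\item Comparison on $[\eta,1]$ therefore gives $\Phi(\eta)\le\eta^{-1}\log\sum_L e^{\eta\Phi_L(\eta)}$.
\item Passing from time $\eta$ to $0$ along the driftless process costs at most $n\eta\tr\sfq(1)$.
\item At time $\eta$ only Gaussian randomness remains, so concentration gives $\max_L\E\Phi_L+\eta^{-1}\log|\mathcal L_n|+Cn\eta$, and one takes $\eta\asymp\sqrt{\log(n+1)/n}$.
\end{itemize}

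\textbf{The $\theta=1$ step does not establish what is needed, though it is repairable.} Your displayed bound $\bar F_{kn}\le\min_L\bar F_{kn}(\cdot\,;\Sigma_{kn}(L))+J\log(kn+1)/(kn)$ is the trivial direction, since restriction increases the free energy. You need the reverse, which is the same $\E$--$\max$ exchange.

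The identity $\E Z_{\mathrm{bal},L}=\rho\,\E Z_L$ is annealed: because of the variance corrections, $\E Z_\Sigma=P_{kn}(\Sigma)$. It says nothing about $\E\log Z$, because fluctuations of size $\sqrt{kn}$ cannot bridge an annealed--quenched gap of order $kn$.

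What works is the identity you mention in passing, $\E[Z_{\mathrm{bal},L}/Z_L\,|\,\mathfrak R]=\rho$, combined with Gaussian concentration of both logarithms conditionally on $\mathfrak R$. Set $\Delta=\E[\log Z_L-\log Z_{\mathrm{bal},L}\,|\,\mathfrak R]$. Then $\rho\le e^{-\Delta/2}+4e^{-c\Delta^2/(kn)}$. Since $\log(1/\rho)\lesssim kJ\log(n+1)$, this gives $\Delta/(kn)\lesssim\sqrt{\log(n+1)/n}$, not the $O(\log n/n+n^{-1/2})$ you claim.

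At $\theta=1$ all comparisons happen inside one system. So the maximization over $M\in\mathcal L_{kn}$ and the rounding $M\to L$ can also be done conditionally on $\mathfrak R$. The rounding needs a prior and entropy count over the $O(kJ)$ changed sites, not the uniform-map bound \eqref{eq:prior-approximation}.

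Repaired this way, your route is a legitimate alternative to the paper's. The paper instead uses exchangeability to show that block types differ by $O(n^{-1/2})$, balances by changing $O(k\sqrt{Mn})$ sites, and applies the Gibbs variational principle pathwise. The conditioning-on-$\mathfrak R$ repair does not transfer to $\theta=0$, for the reason in the first part, which is why the PDE argument is needed there.
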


This follows by the triangle inequality from the two endpoint
estimates below.

\begin{lem}
\label{lem:full-endpoint}
For every $T,Q<\infty$, there exists
$C=C_{P_1,D,\xi,T,Q}>0$ such that, for all $n\geq1$ and $k\geq2$,
\[
 \sup_{\substack{0\leq t\leq T,\ \sfq\in\mathcal Q_\infty\\
                   |\sfq|_{L^\infty}\leq Q}}
 |\bar F_{1,k,n}(t,\sfq;\Sigma^{\mathrm{bal}}_{kn})
       -\bar F_{kn}(t,\sfq)|
 \leq C\frac{\log(n+1)}{\sqrt n}.
\]
\end{lem}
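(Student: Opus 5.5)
We compare the full free energy $\bar F_{kn}(t,\sfq)$, which at $\theta=1$ coincides with $\bar F_{1,k,n}(t,\sfq;(\supp P_1)^{kn})$, with its restriction to the balanced set. Write $Z(\Sigma)$ for the partition function (the double integral) restricted to $\Sigma$. Since the integrand is positive, $Z(\Sigma^{\mathrm{bal}}_{kn})\leq Z$, so $\bar F_{1,k,n}(t,\sfq;\Sigma^{\mathrm{bal}}_{kn})\geq\bar F_{kn}(t,\sfq)$. It remains to prove the reverse inequality up to an error $C\log(n+1)/\sqrt n$.

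The strategy is a combinatorial transport from arbitrary configurations to balanced ones. Decompose $(\supp P_1)^{kn}$ according to the global empirical measure $M$ of all $kn$ spins, which ranges over at most $(kn+1)^{|\supp P_1|}$ values. For $\sigma$ with global type $M$, choose $L\in\mathcal L_n$ with $|L-M|_{\mathrm{TV}}\leq c/n$, with $c$ depending only on $|\supp P_1|$. We then modify $\sigma$ into a balanced $\tau\in\Sigma^{\mathrm{bal}}_{kn}(L)$ by changing as few coordinates as possible. The issue is that a typical configuration has block types fluctuating at scale $1/\sqrt n$, so about $k\sqrt n$ coordinates must be changed. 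Changing $m$ columns modifies the overlaps $\sigma\tau^\intercal/(kn)$ by $O(m/(kn))$, which is too large for a pointwise Hamiltonian comparison at the level of $kn\cdot m/(kn)=m$.

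For this reason I would not argue pointwise but through a large deviation/entropy estimate on the Gibbs measure. The step is
\[
 Z(\Sigma^{\mathrm{bal}}_{kn})\geq Z\cdot\bigl\langle \mathbf 1_{\Sigma^{\mathrm{bal}}}\bigr\rangle,
\]
so the task reduces to showing $-\frac1{kn}\E\log\langle\mathbf 1_{\Sigma^{\mathrm{bal}}}\rangle\leq C\log(n+1)/\sqrt n$. To prove this, I would use the exchangeability of sites in the full model, since $H_{kn}$ and the field $W_{kn}^{\sfq}\cdot\sigma$ are invariant under permutations of the columns. Given a sample $\sigma$, a uniformly random permutation of its sites gives a random split into $k$ blocks, which is balanced with probability at least $\prod_\ell$ of a multivariate hypergeometric point probability, roughly $(Cn)^{-k|\supp P_1|/2}$, provided the global type $M$ lies in $n^{-1}$ times a lattice compatible with balance. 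Since the Gibbs measure is permutation invariant (conditionally on the disorder, in law), this would give $\langle \mathbf 1_{\Sigma^{\mathrm{bal}}}\rangle\gtrsim (Cn)^{-Ck}\langle\mathbf 1_{\{M\in n^{-1}\text{-compatible}\}}\rangle$. The resulting loss is $Ck\log n/(kn)=O(\log n/n)$, which is better than required.

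The main obstacle is the global types $M$ with $kM$ not of the form $kL$, that is, when the counts are not divisible by $k$. These require changing $O(k)$ coordinates, at most $k$ per value. Changing one column changes the Hamiltonian $\sqrt{2t}H_{kn}$ by a Gaussian of variance $O(1)$ per column, so the free energy changes by $O(1)/(kn)$ in expectation after applying Gaussian concentration via a Lipschitz/interpolation bound. Each column change also costs $\log\min P_1$ in reference mass. The total is $O(k)/(kn)=O(1/n)$. The permutation-invariance step needs care because it holds only in the averaged sense. I would handle it by applying Jensen to $\E\log$ and the permutation-invariance of the law of $(H_{kn},W_{kn}^{\sfq})$. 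If this route falters, the fallback is the cruder pointwise coupling of $\sqrt n$-changes, using the Lipschitz estimate of Lemma~\ref{lem:restricted-continuity}-type interpolation in configurations, which yields exactly the stated rate $\log(n+1)/\sqrt n$ up to constants depending on $T,Q,\xi$.
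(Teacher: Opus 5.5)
\textbf{The main step of your route fails.} Your identity $Z(\Sigma^{\mathrm{bal}}_{kn})=Z\langle\mathbf 1_{\Sigma^{\mathrm{bal}}_{kn}}\rangle$ and the lower bound by restriction are correct. The problem is the next step. For fixed disorder and cascade, the Gibbs measure $\langle\cdot\rangle$ is not invariant under permutations of the sites; only the averaged law $\E\langle\cdot\rangle$ is. Your random-permutation argument therefore gives only the annealed estimate $\E\langle\mathbf 1_{\Sigma^{\mathrm{bal}}_{kn}}\rangle\gtrsim(Cn)^{-Ck}\,\E\langle\mathbf 1_{\{\text{compatible}\}}\rangle$. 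The lemma needs a lower bound on $\E\log\langle\mathbf 1_{\Sigma^{\mathrm{bal}}_{kn}}\rangle$.

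Jensen's inequality gives $\E\log\langle\mathbf 1_{\Sigma^{\mathrm{bal}}_{kn}}\rangle\leq\log\E\langle\mathbf 1_{\Sigma^{\mathrm{bal}}_{kn}}\rangle$, which is the wrong direction. Gaussian concentration of $\log Z$ and $\log Z(\Sigma^{\mathrm{bal}}_{kn})$ at scale $\sqrt{kn}$ does not close the gap: passing from a bound on $\E e^X$ to one on $\E X$ then costs order $kn$, i.e.\ order one after dividing by $kn$. A quenched bound of the strength you claim would require controlling how a fixed realization of the disorder biases the Gibbs measure toward particular block types. The permutation trick gives no such control.

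Your divisibility fix has the same flaw. An $O(1)$-variance change of $H_{kn}$ at a fixed configuration says nothing directly about a Gibbs-sampled configuration. That needs Gaussian integration by parts under $\langle\cdot\rangle$, not concentration of a single Gaussian.

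\textbf{Your fallback is the route that works, but it is not yet a proof.} You first dismissed it as too costly, but changing $d$ columns costs energy $O(d)$ in Gibbs average. With $d\approx k\sqrt n$, this gives $d/(kn)=O(n^{-1/2})$, exactly the target. As written, though, the fallback omits what makes it work. Let $\sigma'$ keep the first block of $\sigma$ and minimally modify the other blocks so that every block type equals $L_1(\sigma)$. Three ingredients are then needed.

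(i) Use exchangeability of $\E\langle\cdot\rangle$ only for the linear functional $\E\langle d(\sigma)\rangle$. Conditionally on the global type, sampling without replacement gives $\E\langle d(\sigma)\rangle\leq(k-1)\sqrt{Mn}$ with $M=\#\supp P_1$. Since $d(\sigma)$ is small only on average (it can be of order $kn$ for atypical $\sigma$), no comparison pointwise in $\sigma$ is possible.

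(ii) For each disorder, apply the Gibbs variational principle to $Z(\Sigma^{\mathrm{bal}}_{kn})$, using as trial measure the law of $(\sigma',\alpha)$ under $\langle\cdot\rangle$. The energy and reference-measure terms are linear in $\langle\cdot\rangle$. After taking expectation, Gaussian integration by parts with the covariance of the centered Hamiltonian bounds them by $C\E\langle d(\sigma)\rangle$; this uses that every atom of $P_1$ has positive mass.

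(iii) Because $\sigma\mapsto\sigma'$ is many-to-one, the trial measure loses the conditional entropy $\mathrm H(\sigma\,|\,\sigma',\alpha)$. This is at most $Nh(\bar p)+N\bar p\log M$, with $\bar p=\E\langle d(\sigma)\rangle/N$ and $h$ the binary entropy. Here concavity of $h$ makes Jensen go the right way, and this term is the source of the factor $\log(n+1)$.

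Finally, Lemma~\ref{lem:restricted-continuity} concerns continuity in the path $\sfq$, not in the configuration, and plays no role here.
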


\begin{lem}
\label{lem:block-endpoint}
For every $T,Q<\infty$, there exists
$C=C_{P_1,D,\xi,T,Q}>0$ such that, for all $n\geq1$ and $k\geq2$,
\[
 \sup_{\substack{0\leq t\leq T,\ \sfq\in\mathcal Q_\infty\\
                   |\sfq|_{L^\infty}\leq Q}}
 |\bar F_{0,k,n}(t,\sfq;\Sigma^{\mathrm{bal}}_{kn})
       -\bar F_n(t,\sfq)|
 \leq C\sqrt{\frac{\log(n+1)}n}.
\]
\end{lem}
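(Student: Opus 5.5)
The plan is to compare $\bar F_{0,k,n}(t,\sfq;\Sigma^{\mathrm{bal}}_{kn})$ with $\bar F_n(t,\sfq)$ from both sides. We use the product structure at $\theta=0$ together with the decomposition of the balanced set into the pieces $\Sigma^{\mathrm{bal}}_{kn}(L)=\Sigma_n(L)^k$.

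\emph{Step 1: factorization at $\theta=0$.} At $\theta=0$ the Hamiltonian is a sum over blocks. For any set of the form $A_1\times\cdots\times A_k$ with $A_\ell\subseteq(\supp P_1)^n$, the spin integral therefore factorizes as $\prod_{\ell}Z_\ell(\alpha;A_\ell)$, where $Z_\ell$ involves only $H^{(\ell)}_n$ and the columns of $W^{\sfq}_{kn}$ in block $\ell$. These columns are independent copies of $W_n^{\sfq}$. The cascade $\alpha$ is shared across blocks, so independence alone does not factorize $\E\log\int\prod_\ell Z_\ell\dd\mathfrak R$. I would handle this by first reducing to paths $\sfq$ taking finitely many values, which is allowed by Lemma~\ref{lem:restricted-continuity} and is uniform over $|\sfq|_{L^\infty}\le Q$. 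For such paths, the averages over the Gaussian fields are computed by the Parisi recursion. At each level, conditionally on the ancestors, the increments for different blocks are independent, so $\E[(\prod_\ell Y_\ell)^m]=\prod_\ell\E[Y_\ell^m]$ and the recursion turns the logarithm of the product into a sum. The disorder $H^{(\ell)}_n$ sits outside the recursion, since it is averaged last. This gives
\[
\bar F_{0,k,n}(t,\sfq;A_1\times\cdots\times A_k)=\frac1k\sum_{\ell=1}^k\bar F_n(t,\sfq;A_\ell),
\]
and in particular $\bar F_{0,k,n}(t,\sfq;\Sigma_n(L)^k)=\bar F_n(t,\sfq;\Sigma_n(L))$. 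Passing to the limit returns this identity to general $\sfq$.

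\emph{Step 2: the two inequalities.} Since $\Sigma^{\mathrm{bal}}_{kn}$ is contained in the full space and the full space is a product, Step 1 gives $\bar F_{0,k,n}(t,\sfq;\Sigma^{\mathrm{bal}}_{kn})\ge\bar F_n(t,\sfq)$. For the other direction, fix any deterministic $L^*\in\mathcal L_n$. Since $\Sigma^{\mathrm{bal}}_{kn}\supseteq\Sigma_n(L^*)^k$, we get $\bar F_{0,k,n}(t,\sfq;\Sigma^{\mathrm{bal}}_{kn})\le\bar F_n(t,\sfq;\Sigma_n(L^*))$. It therefore suffices to find $L^*$, which may depend on $(t,\sfq)$, with
\[
\bar F_n(t,\sfq;\Sigma_n(L^*))\le\bar F_n(t,\sfq)+C\sqrt{\log(n+1)/n}.
\]

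\emph{Step 3: selecting a type.} Write $Z_n=\sum_{L\in\mathcal L_n}Z_n(L)$, where $Z_n(L)=\int\int_{\Sigma_n(L)}e^{H^{t,\sfq}_n}\dd P_n\dd\mathfrak R$. Then
\[
\log Z_n\le\max_L\log Z_n(L)+\log|\mathcal L_n|,\qquad |\mathcal L_n|\le(n+1)^{|\supp P_1|}.
\]
This gives $-\bar F_n\le\frac1n\E\max_L\log Z_n(L)+C\log(n+1)/n$. It remains to exchange $\E$ and $\max_L$ at cost $C\sqrt{\log(n+1)/n}$. Take $L^*$ maximizing $\E\log Z_n(L)$. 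By a union bound over the $(n+1)^{|\supp P_1|}$ types, this requires sub-Gaussian concentration of $\frac1n\log Z_n(L)$ around its mean with variance proxy $C/n$, uniformly in $L$. Conditionally on the cascade, $\log Z_n(L)$ is a Lipschitz function of the Gaussian disorder $(H_n,W_n^{\sfq})$. Its variance proxy is bounded by the maximal variance of the centered Hamiltonian, which is at most $n(2T\sup_{|a|\le1}|\xi(a)|+2Q)$ because the spins lie in the unit ball. Gaussian concentration then gives the conditional bound.

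\emph{Main obstacle.} The main obstacle is the randomness of the cascade itself: the conditional mean given $\mathfrak R$ still fluctuates. I would first try to use the standard concentration of RPC free energies, again for $\sfq$ with finitely many values. The idea is to write $\log\int Z(\alpha)\dd\mathfrak R$ through the recursive Poisson–Dirichlet structure, or through the invariance in \cite[Proposition~4.8]{ChenMourrat2025}, and show that the fluctuations coming from the cascade weights are also of order $\sqrt n$ for $\log Z_n(L)$. If that fails, a fallback is to bound the expected maximum by comparing each $Z_n(L)$ with the full $Z_n$ through the Gibbs probability of the type $L$. Combining the union bound with Steps 1–2 yields the bound of Lemma~\ref{lem:block-endpoint}, with constants depending only on $P_1,D,\xi,T,Q$.
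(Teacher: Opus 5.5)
Your Steps~1 and~2 are sound and agree with the paper. The factorization at $\theta=0$ is obtained in the paper from additivity of the Parisi PDE over disjoint blocks; your Parisi recursion for step paths is the discrete form of the same fact. The sandwich $\bar F_n(t,\sfq)\le\bar F_{0,k,n}(t,\sfq;\Sigma^{\mathrm{bal}}_{kn})\le\min_L\bar F_n(t,\sfq;\Sigma_n(L))$ is the same in both. The gap is Step~3, which carries the whole content of the lemma and which you leave open.

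A union bound over the $|\mathcal L_n|\le(n+1)^M$ types needs sub-Gaussian \emph{tails} at scale $\sqrt n$ for $\log Z_n(L)$. These must hold with respect to all the randomness, cascade included, and uniformly in $L$ and in $\sfq$. A variance bound alone only yields a loss of order $\sqrt{|\mathcal L_n|\,n}$, which is not $o(n)$ once $M\geq2$.

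Conditionally on $\mathfrak R$, your Gaussian concentration argument is fine. However, you give no argument for the fluctuations of $\E[\log Z_n(L)\,|\,\mathfrak R]$, and these are delicate. Take a step path whose first jump is at $\zeta_1$. Given the root variables, the Poisson--Dirichlet invariance writes $\log Z_n(L)$ as a root-measurable term plus $\log S'_L-\log S$, where $S'_L$ and $S$ are $\zeta_1$-stable. The tail bounds available this way are only exponential with rate $\zeta_1$, and $\zeta_1$ is not bounded below uniformly in $\sfq$.

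Your fallback through the Gibbs probability of a type only restates the problem. Since $\log Z_n(L)=\log Z_n+\log\langle\mathbf 1_{\Sigma_n(L)}\rangle$, it asks for a deterministic $L$ with $\E\log\langle\mathbf 1_{\Sigma_n(L)}\rangle\geq-C\sqrt{n\log(n+1)}$, which is exactly what has to be proved.

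The missing idea is that no concentration in the cascade is needed at all. The paper works conditionally on the disorder $g$ with the Parisi PDE, in which the cascade has already been integrated out exactly.
\begin{itemize}
\item The terminal data satisfy $\Phi(1)=\log\sum_Le^{\Phi_L(1)}$.
\item The soft-max $\widehat\Phi(s)=s^{-1}\log\sum_Le^{s\Phi_L(s)}$ has PDE residual $s^{-2}\sum_Lp_L\log p_L\leq0$. Comparison on $[\eta,1]$ then gives $\Phi(\eta,\cdot)\leq\eta^{-1}\log\sum_Le^{\eta\Phi_L(\eta,\cdot)}$.
\item The interval $[0,\eta]$ costs at most $n\eta\tr\sfq(1)$, by It\^o's formula along the driftless process.
\item At time $\eta$ only the Gaussian variables $(g,z)$ remain, and each $\Phi_L$ is $O(\sqrt n)$-Lipschitz in them. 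Jensen's inequality and the Gaussian moment generating function bound give $\max_L\E\Phi_L+\eta^{-1}\log|\mathcal L_n|+Cn\eta$.
\item Choosing $\eta\asymp\sqrt{\log(n+1)/n}$ gives the rate.
\end{itemize}

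In your language, this amounts to two moves. First, flatten $\sfq$ on $[0,\eta)$, at cost $O(\eta)$ in $\bar F_n$ by Lemma~\ref{lem:restricted-continuity}. Second, let the Poisson--Dirichlet weights at level $\eta$ act as a soft-max at inverse temperature $\eta$. The cascade then costs only $O(\eta^{-1}\log|\mathcal L_n|)$ in the maximum over types, and concentration is needed only in the Gaussian root variables.
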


\subsection{The Parisi equation}
\label{sec:parisi-equation}

Conditionally on the Hamiltonian, the free energy is the averaged
initial value of a deterministic Parisi equation. Here $s$ is the
cascade parameter, $x$ is the external field, and $t$ enters through
the terminal condition.

\begin{lem}
\label{lem:parisi-representation}
Let $\mu$ be a nonzero finite measure on $\R^{D\times N}$, supported on
configurations with $|\sigma_{\cdot,i}|\leq1$ for every $i$. Let
$\sfq\in C^1([0,1];S^D_+)$ be increasing. The equation
\begin{equation}
\begin{cases}
 \partial_s\Phi+
 \displaystyle\sum_{i=1}^N\dot\sfq(s)\cdot\nabla^2_{x_{\cdot,i}}\Phi
 +s\,\dot\sfq(s)\cdot
       \bigl(\nabla_x\Phi(\nabla_x\Phi)^\intercal\bigr)=0,
 & s<1,\\[1ex]
 \displaystyle
 \Phi(1,x)=\log\int
 e^{x\cdot\sigma-\sfq(1)\cdot\sigma\sigma^\intercal}\dd\mu(\sigma)
\end{cases}
\label{eq:parisi-pde}
\end{equation}
has a unique classical solution with bounded spatial gradient.
It is convex in $x$, its spatial derivatives of positive order
are bounded, and
\[
 |\nabla_{x_{\cdot,i}}\Phi(s,x)|\leq1,\qquad 1\leq i\leq N.
\]
Moreover,
\begin{equation}
 \E\Phi(0,\sqrt{2\sfq(0)}z_0)
 =\E\log\iint
 e^{\sqrt2 W_N^{\sfq}(\alpha)\cdot\sigma
       -\sfq(1)\cdot\sigma\sigma^\intercal}
 \dd\mu(\sigma)\dd\mathfrak R(\alpha),
 \label{eq:parisi-representation}
\end{equation}
where $z_0$ has independent standard Gaussian entries.
\end{lem}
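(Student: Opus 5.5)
The plan has three stages: solve the equation explicitly at the level of a step path by Hopf--Cole, pass to a general $C^1$ path by approximation, and identify the cascade expectation with the initial value.

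\textbf{Step paths.} I would first treat a step path $\sfq$ with values $\sfq_0\preceq\sfq_1\preceq\cdots\preceq\sfq_K$ on the intervals $[\zeta_{j},\zeta_{j+1})$. For such a path the solution is given by the Parisi recursion
\[
 \Phi_j(x)=\frac1{\zeta_j}\log\E\exp\bigl(\zeta_j\Phi_{j+1}(x+\sqrt{2(\sfq_{j+1}-\sfq_j)}\,z)\bigr),
\]
with the heat kernel acting independently on each column. On each interval where $s$ and $\dot\sfq$ may be treated as frozen, the Hopf--Cole transform $e^{s\Phi}$ turns the nonlinear term $s\,\dot\sfq\cdot\nabla\Phi\nabla\Phi^\intercal$ into a linear heat equation with diffusion $\dot\sfq$ acting column by column. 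The terminal condition is a log-Laplace transform of $\mu$, so it is smooth and convex. Its gradient $\nabla_{x_{\cdot,i}}\Phi(1,x)$ is the Gibbs average of $\sigma_{\cdot,i}$, which has norm at most $1$.

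\textbf{Propagation of bounds.} Each step of the recursion preserves convexity by Hölder's inequality. It also preserves the column gradient bound, since $\nabla\Phi_j$ is an average of $\nabla\Phi_{j+1}$ under a tilted Gaussian measure, and an average of vectors of norm at most $1$ has norm at most $1$. Higher derivatives remain bounded, because they are cumulants of $\sigma$ under nested Gibbs-type averages, and these are bounded since $|\sigma_{\cdot,i}|\le1$ and $\mu$ is finite.

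\textbf{General $C^1$ paths.} For a general $C^1$ increasing path, I would approximate $\sfq$ by step paths, or equivalently by piecewise-frozen coefficients, and use the uniform derivative bounds to pass to the limit. Alternatively, I would construct the solution directly through the stochastic control or Hopf--Cole representation on small time intervals and glue these together, again using the uniform bounds. Uniqueness within the class of classical solutions with bounded gradient follows from a comparison principle. The difference of two solutions satisfies a linear parabolic equation whose drift $s\dot\sfq(\nabla\Phi+\nabla\Phi')$ is bounded, so the maximum principle on $\R^{D\times N}$ applies via a standard Phragmén--Lindelöf barrier.

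\textbf{Representation formula.} For step paths, \eqref{eq:parisi-representation} is the standard Ruelle probability cascade identity, obtained by iterating the recursion \cite[Section 4]{ChenMourrat2025}. The process $W_N^{\sfq}(\alpha)$ restricted to the cascade is built from independent Gaussian increments with covariances $2(\sfq_{j+1}-\sfq_j)$ along the tree, and the cascade weights at level $j$ produce exactly the $\frac1{\zeta_j}\log\E\exp(\zeta_j\cdot)$ operation. Both sides are Lipschitz in $\sfq$ for the $L^1$ norm. For the left side this follows by differentiating the equation in $\sfq$, which gives an error $\int\|\delta\sfq\|_{\mathrm{op}}$ because the gradients are bounded by $1$ and the traces of the Hessians are bounded. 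For the right side it is Lemma~\ref{lem:restricted-continuity} with general $\mu$. Density of step paths then gives the identity for $C^1$ paths.

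\textbf{Main obstacle.} I expect the main difficulty to be the well-posedness of the classical solution together with uniform derivative bounds independent of the approximation. The Hessian bound is the delicate part, since it is needed later, and I would obtain it from the cumulant representation with its bound depending only on $N$, $\mu$, and $|\sfq|_{L^\infty}$.
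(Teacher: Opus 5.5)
Your plan is viable in outline, but it takes a different route from the paper. The paper does not construct the solution. It imports existence, regularity and the cascade identity from \cite[Lemma~4.5 and Proposition~4.7]{ChenMourratSimultaneous}, in dimension $DN$ with covariance path $I_N\otimes\sfq$, and adds the field $\sqrt{2\sfq(0)}z_0$ as in \cite[Lemma~5.4]{ChenMourratSimultaneous}. It argues directly only the column gradient bound, convexity and uniqueness. On these three points you argue as the paper does: averages of $\sigma_{\cdot,i}$, H\"older's inequality, and a linear equation with bounded drift, handled by a barrier instead of It\^o's formula with localization.

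Your construction through Hopf--Cole recursions makes the proof self-contained, but the limit you flag as the main obstacle is where all the work lies. A step-path recursion is piecewise constant in $s$, hence not a classical solution. You therefore need uniform bounds on spatial derivatives up to order three or four, and a consistency estimate at each jump. The frozen-coefficient variant is cleaner: it is classical on each cell and differs from \eqref{eq:parisi-pde} only by an $O(\mathrm{mesh})$ change of the coefficient $s$ in the nonlinear term.

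The uniform Hessian bound cannot come from inducting $\nabla^2\Phi_j=\E'[\nabla^2\Phi_{j+1}]+\zeta_j\Cov'(\nabla\Phi_{j+1})$ cell by cell, because the covariance terms accumulate with the number of cells. It comes instead from the telescoped martingale form behind \eqref{eq:covariance-identity}. Alternatively, use the cascade representation at each fixed $(s,x)$, where spatial derivatives are disorder-averaged Gibbs cumulants of $\sigma$. So your ``cumulant representation'' is the right tool, provided you use it at every $(s,x)$ and not only at $s=0$.

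One justification fails as written: the $L^1$-Lipschitz bound for the left side of \eqref{eq:parisi-representation} ``by differentiating the equation in $\sfq$''. Between a step path and a $C^1$ path there is no classical solution to differentiate. Even between $C^1$ paths, with $\kappa=\delta\sfq$, the linearized equation has the source $\dot\kappa\cdot\bigl(\sum_i\nabla^2_{x_{\cdot,i}}\Phi+s\nabla_x\Phi(\nabla_x\Phi)^\intercal\bigr)$. There is also a term in $\kappa(1)$ from the terminal condition and one in $\kappa(0)$ from the initial field. Bounded gradients and Hessian traces therefore control the derivative only by $|\kappa(0)|+|\kappa(1)|+\int_0^1|\dot\kappa|$, which stays of order one when $\kappa$ is the gap between $\sfq$ and a smoothed step approximation. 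Among $C^1$ paths an $L^1$ bound can be recovered by integrating by parts in $s$: by \eqref{eq:covariance-identity}, the expectation of the bracket along the Parisi diffusion has $s$-derivative $\E[m_sm_s^\intercal]$, and the endpoint terms cancel. This still does not reach step paths.

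The repair is to drop the claim. If $\Phi$ is built as the limit of the discretized recursions, the left sides converge by construction. The right sides converge by Lemma~\ref{lem:restricted-continuity}, whose proof uses only that the columns of $\sigma$ lie in the unit ball. The step-path identity then passes to the limit.
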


\begin{proof}
Apply \cite[Lemma~4.5 and Proposition~4.7]{ChenMourratSimultaneous}
in dimension $DN$, with covariance path $I_N\otimes\sfq$ and
distribution function $s\mapsto s$. Adding the independent field
$\sqrt{2\sfq(0)}z_0$ gives \eqref{eq:parisi-representation}, as in
\cite[Lemma~5.4]{ChenMourratSimultaneous}. Differentiating the
representation gives the bound on the gradient in each column, and
H\"older's inequality gives convexity. For uniqueness, the difference
of two solutions satisfies a linear backward equation with bounded
drift, zero terminal value, and at most linear growth, and It\^o's
formula with localization shows that it vanishes.
\end{proof}

For the restricted model of size $N$, take
\[
 \dd\mu(\sigma)=\mathbf1_\Sigma(\sigma)
 \exp\left(\sqrt{2t}H_N(\sigma)
       -tN\xi\left(\frac{\sigma\sigma^\intercal}{N}\right)\right)
 \dd P_N(\sigma).
\]
Then
\[
 \bar F_N(t,\sfq;\Sigma)
 =-\frac1N\E_{H_N,z_0}
 \Phi(0,\sqrt{2\sfq(0)}z_0;\Sigma).
\]
For the interpolated model, replace $H_N$ by $H_{\theta,k,n}$, $N$ by
$kn$, and the variance correction accordingly. The expectation over the
Hamiltonian is taken outside the PDE. Restricting the domain decreases
the integral, so restriction increases the free energy. If the terminal
function is a sum of functions of disjoint blocks of columns, then the
sum of the solutions for the blocks solves \eqref{eq:parisi-pde}, since
the nonlinear term is a sum over columns. By uniqueness, it is the
solution.

We next record the response to a terminal perturbation. If the terminal
exponent contains $h\cdot b(\sigma)$, with $b$ a bounded Borel map to a
finite-dimensional Euclidean space, then the mixed derivatives in $x$
and $h$ are bounded. This is
\cite[Proposition~4.7]{ChenMourratSimultaneous}, applied with zero
covariance in the $h$ coordinates.

Let $B$ be a standard $D\times N$ Brownian motion independent of $z_0$,
and let
\begin{equation}
 \dd X_s
 =2s\,\dot\sfq(s)\nabla_x\Phi(s,X_s)\dd s
 +\sqrt{2\dot\sfq(s)}\dd B_s,
 \qquad X_0=\sqrt{2\sfq(0)}z_0.
 \label{eq:parisi-diffusion}
\end{equation}
Since the Hessian of $\Phi$ is bounded, this equation has a unique
strong solution. Let $(\mathcal G_s)_{0\leq s\leq1}$ be the filtration
generated by $z_0$ and $B$. Conditionally on $\mathcal G_1$, sample
$\sigma$ with law
\[
 \exp\bigl(X_1\cdot\tau-\sfq(1)\cdot\tau\tau^\intercal
                  -\Phi(1,X_1)\bigr)\dd\mu(\tau).
\]
We also write $\E$ for the expectation under this construction.
Hessians and covariance matrices act on the vector of stacked columns
in $\R^{DN}$.

\begin{lem}
\label{lem:martingale}
Let $v$ be a bounded Borel function on $\supp\mu$, and let $\Phi^y$ be
the solution with terminal measure $e^{yv}\mu$. Along the unperturbed
diffusion,
\begin{equation}
\begin{aligned}
 \left.\partial_y\Phi^y(s,X_s)\right|_{y=0}
 &=\E[v(\sigma)\,|\,\mathcal G_s],\\
 \left.\partial_y^2\Phi^y(s,X_s)\right|_{y=0}
 &=s\Var(v(\sigma)\,|\,\mathcal G_s)
   +\E\left[\int_s^1\Var(v(\sigma)\,|\,\mathcal G_r)\dd r
                \,|\,\mathcal G_s\right].
\end{aligned}
\label{eq:terminal-response}
\end{equation}
In particular, set $m_s=\E[\sigma\,|\,\mathcal G_s]$,
$V_s=\Cov(\sigma\,|\,\mathcal G_s)$, and $K_s=\nabla_x^2\Phi(s,X_s)$.
Then $m_s=\nabla_x\Phi(s,X_s)$ and
\begin{equation}
 K_s=sV_s+
 \E\left[\int_s^1 V_r\dd r\,|\,\mathcal G_s\right].
 \label{eq:covariance-identity}
\end{equation}
If $\dot\sfq\succeq\varepsilon I_D$ for some $\varepsilon>0$, then
every deterministic orthogonal projection $\Pi$ on $\R^{DN}$ satisfies
\begin{equation}
 \int_0^1\E\tr(\Pi K_s^2)\dd s
 \leq\frac{\E|\Pi\sigma|^2}{2\varepsilon}.
 \label{eq:parisi-covariance-energy}
\end{equation}
\end{lem}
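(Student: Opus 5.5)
The plan is to treat everything as Itô calculus along the Parisi diffusion \eqref{eq:parisi-diffusion}. Two facts drive it. First, $\Phi^y(s,X_s)$ solves a perturbed Parisi equation, so its $y$-derivatives solve linearized backward equations. Second, conditionally on $\mathcal G_s$, the law of $\sigma$ is the Gibbs measure with tilt $\Phi(s,\cdot)$.

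\emph{First identity.} Differentiating \eqref{eq:parisi-pde} in $y$ shows that $u=\partial_y\Phi^y|_{y=0}$ satisfies
\[
\partial_s u+\sum_i\dot\sfq\cdot\nabla^2_{x_{\cdot,i}}u+2s\,\dot\sfq\cdot\bigl(\nabla_x u(\nabla_x\Phi)^\intercal\bigr)=0,\qquad u(1,x)=\frac{\int v\,e^{x\cdot\sigma-\sfq(1)\cdot\sigma\sigma^\intercal}\dd\mu}{e^{\Phi(1,x)}}.
\]
The drift here is exactly the drift of \eqref{eq:parisi-diffusion}, so Itô's formula makes $u(s,X_s)$ a local martingale. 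Boundedness of $v$ makes it bounded, hence a true martingale, and therefore $u(s,X_s)=\E[u(1,X_1)\,|\,\mathcal G_s]=\E[v(\sigma)\,|\,\mathcal G_s]$. Taking $v=\sigma_{a}$ coordinatewise, which amounts to shifting $x$, gives $m_s=\nabla_x\Phi(s,X_s)$.

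\emph{Second identity.} Differentiating once more, $w=\partial_y^2\Phi^y|_{y=0}$ satisfies the same linear equation plus the source $2s\,\dot\sfq\cdot(\nabla_x u\nabla_x u^\intercal)$. Its terminal value is $w(1,X_1)=\Var(v\,|\,\mathcal G_1)$. Itô's formula then gives
\[
w(s,X_s)=\E\Bigl[\Var(v\,|\,\mathcal G_1)+\int_s^1 2r\,\dot\sfq(r)\cdot\nabla_xu\nabla_xu^\intercal\dd r\,\Big|\,\mathcal G_s\Bigr].
\]
The martingale $M_r=\E[v\,|\,\mathcal G_r]=u(r,X_r)$ has $\dd\langle M\rangle_r=2\dot\sfq\cdot\nabla_xu\nabla_xu^\intercal\dd r$, with the block-diagonal $\dot\sfq$ acting columnwise. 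Hence
\[
\Var(v\,|\,\mathcal G_r)=\E\bigl[\Var(v\,|\,\mathcal G_1)\,|\,\mathcal G_r\bigr]+\E\bigl[\langle M\rangle_1-\langle M\rangle_r\,|\,\mathcal G_r\bigr].
\]
Write $A_r=\dd\langle M\rangle_r/\dd r$. Substituting the display and using the Fubini identity $\int_s^1 r A_r\dd r=s\int_s^1A_r\dd r+\int_s^1\int_r^1A_\rho\dd\rho\dd r$, we obtain
\[
w(s,X_s)=s\Var(v\,|\,\mathcal G_s)+\E\Bigl[\int_s^1\Var(v\,|\,\mathcal G_r)\dd r\,\Big|\,\mathcal G_s\Bigr].
\]
The $\E[\Var(v\,|\,\mathcal G_1)\,|\,\mathcal G_s]$ terms combine correctly: $(1-s)$ copies come from the integral and $s$ copies from $s\Var$. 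This bookkeeping is the step most likely to need care. Applying it to $v=\theta\cdot\sigma$ and polarizing gives \eqref{eq:covariance-identity}, since $\partial_y^2\Phi^y$ at $y=0$ with $v=\theta\cdot\sigma$ is $\theta^\intercal K_s\theta$.

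\emph{Energy bound \eqref{eq:parisi-covariance-energy}.} The vector martingale $m_s=\nabla_x\Phi(s,X_s)$ satisfies, by Itô and the PDE for $\nabla\Phi$, $\dd m_s=K_s\sqrt{2\dot\sfq(s)}\,\dd B_s$ with no drift. Consequently
\[
\E|\Pi m_1|^2-\E|\Pi m_0|^2=2\int_0^1\E\tr\bigl(\Pi K_s(I_N\otimes\dot\sfq)K_s\Pi\bigr)\dd s\geq 2\varepsilon\int_0^1\E\tr(\Pi K_s^2)\dd s.
\]
Finally $\E|\Pi m_1|^2\leq\E|\Pi\sigma|^2$ by Jensen, which gives the bound. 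The point I expect to be the main obstacle is justifying the Itô computations, namely the regularity of $u$ and $w$ and the true-martingale property. These rest on the bounded derivatives from Lemma~\ref{lem:parisi-representation} and on the mixed $x,h$ derivative bounds quoted from \cite[Proposition~4.7]{ChenMourratSimultaneous}.
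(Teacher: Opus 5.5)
Your proposal is correct and is essentially the paper's proof. You differentiate the Parisi equation in $y$ and observe that the linearized drift is exactly that of \eqref{eq:parisi-diffusion}, so $\partial_y\Phi^y(s,X_s)|_{y=0}=\E[v(\sigma)\,|\,\mathcal G_s]$; you identify the source $2s\,\dot\sfq\cdot(\nabla_x u\,\nabla_x u^\intercal)$ with $s$ times the density of $\dd\langle M\rangle_s$; you pass to linear $v$ by translation in $x$; and you obtain \eqref{eq:parisi-covariance-energy} from $\dd m_s=K_s(I_N\otimes\sqrt{2\dot\sfq(s)})\dd B_s$, It\^o's isometry and Jensen. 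The only difference is bookkeeping: the paper notes that $\partial_y^2\Phi^y(s,X_s)|_{y=0}-s\Var(v(\sigma)\,|\,\mathcal G_s)+\int_0^s\Var(v(\sigma)\,|\,\mathcal G_r)\dd r$ has zero drift, which replaces your Fubini rearrangement (your opening remark on the conditional law of $\sigma$ given $\mathcal G_s$ is only used at $s=1$, where it holds by construction).
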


\begin{proof}
Put $\chi_1=\left.\partial_y\Phi^y\right|_{y=0}$ and
$\chi_2=\left.\partial_y^2\Phi^y\right|_{y=0}$, and differentiate the
PDE in $y$, keeping $X$ unperturbed. By It\^o's formula,
\[
 \dd\chi_1(s,X_s)
 =\sum_{i=1}^N\nabla_{x_{\cdot,i}}\chi_1(s,X_s)\cdot
       \sqrt{2\dot\sfq(s)}\dd B_{\cdot,i,s},
\]
while the drift of $\chi_2(s,X_s)$ is
\[
 -2s\sum_{i=1}^N
 |\sqrt{\dot\sfq(s)}\nabla_{x_{\cdot,i}}\chi_1(s,X_s)|^2.
\]
The terminal values are the conditional mean and variance of
$v(\sigma)$ given $\mathcal G_1$. Hence $\chi_1(s,X_s)$ is the
conditional mean of $v(\sigma)$ given $\mathcal G_s$. It follows that
the drift of $\Var(v(\sigma)\,|\,\mathcal G_s)$ is the same expression
without the factor $s$, and therefore that
\[
 \chi_2(s,X_s)-s\Var(v(\sigma)\,|\,\mathcal G_s)
 +\int_0^s\Var(v(\sigma)\,|\,\mathcal G_r)\dd r
\]
is a martingale. Its terminal value gives
\eqref{eq:terminal-response}. The bounds on the derivatives of $\Phi^y$
justify the differentiation in $y$ and the use of It\^o's formula.

For $v(\sigma)=a\cdot\sigma$, translation in $x$ gives
$\Phi^y(s,x)=\Phi(s,x+ya)$. Varying $a$ yields
\eqref{eq:covariance-identity} and
\begin{equation}
 \dd m_s=K_s\bigl(I_N\otimes\sqrt{2\dot\sfq(s)}\bigr)\dd B_s.
 \label{eq:mean-martingale}
\end{equation}
By It\^o's isometry and Jensen's inequality,
\[
 2\varepsilon\int_0^1\E\tr(\Pi K_s^2)\dd s
 \leq\E|\Pi m_1|^2-\E|\Pi m_0|^2
 \leq\E|\Pi\sigma|^2,
\]
as desired.
\end{proof}

The same identities hold when the diffusion starts at a deterministic
$(s,x)$. The second derivative in \eqref{eq:terminal-response} is
nonnegative. Applying it to every linear combination of $\sigma$ and
$b(\sigma)$ shows that $\Phi(s,x;h)$ is jointly convex in $(x,h)$. In
particular, $\Phi$ is jointly convex in the external field and the
Gaussian disorder, which is used in Section~\ref{sec:overlap-proof}; no
convexity of $\xi$ is required. The first identity in
\eqref{eq:terminal-response} also shows that the derivative in a
disorder coordinate is a conditional average of the corresponding
terminal coefficient.

In the next corollary, $\langle\cdot\rangle$ denotes the Gibbs measure
in \eqref{eq:parisi-representation}. Replicas are independent
conditionally on the cascade and the Gaussian field.

\begin{cor}
\label{cor:correlation-identity}
For every Borel function $v$ bounded on $\supp\mu$,
\begin{equation}
 \E\langle v(\sigma^1)v(\sigma^2)\rangle
 =\int_0^1\E\left[\E[v(\sigma)\,|\,\mathcal G_s]^2\right]\dd s.
 \label{eq:scalar-replicas}
\end{equation}
\end{cor}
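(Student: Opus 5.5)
The plan is to differentiate twice, in a terminal perturbation parameter, the representation \eqref{eq:parisi-representation} from Lemma~\ref{lem:parisi-representation}, and then to compare the two sides using the second identity in \eqref{eq:terminal-response} of Lemma~\ref{lem:martingale}. Throughout, $\sfq\in C^1([0,1];S^D_+)$ is increasing, as in Lemma~\ref{lem:parisi-representation}.

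\textbf{Step 1: set up the perturbation.} Fix $v$ bounded on $\supp\mu$. For $y\in\R$, replace $\mu$ by $e^{yv}\mu$. This measure is still nonzero, finite, and supported on the same configurations. Let $\Phi^y$ be the corresponding solution of \eqref{eq:parisi-pde}. By \eqref{eq:parisi-representation} applied to $e^{yv}\mu$,
\[
 g(y):=\E\Phi^y(0,\sqrt{2\sfq(0)}z_0)
 =\E\log\iint e^{\sqrt2W_N^{\sfq}(\alpha)\cdot\sigma-\sfq(1)\cdot\sigma\sigma^\intercal+yv(\sigma)}\dd\mu(\sigma)\dd\mathfrak R(\alpha).
\]

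\textbf{Step 2: differentiate the right side.} Since $v$ is bounded, differentiation under $\E$ is justified by dominated convergence. The integrand's $y$-derivatives are bounded by powers of $\sup|v|$. This gives
\[
 g'(0)=\E\langle v(\sigma)\rangle,
 \qquad
 g''(0)=\E\langle v(\sigma)^2\rangle-\E\langle v(\sigma)\rangle^2.
\]
By conditional independence of replicas, $\langle v\rangle^2=\langle v(\sigma^1)v(\sigma^2)\rangle$.

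\textbf{Step 3: differentiate the left side.} Now apply Lemma~\ref{lem:martingale} at $s=0$ along the diffusion started from $X_0=\sqrt{2\sfq(0)}z_0$. Differentiation in $y$ inside the expectation is again justified by the boundedness of the derivatives of $\Phi^y$ noted there. The first identity in \eqref{eq:terminal-response} gives $g'(0)=\E\,\E[v(\sigma)\,|\,\mathcal G_0]=\E v(\sigma)$, where $\sigma$ is sampled by the construction following \eqref{eq:parisi-diffusion}. Applying the same identity to the bounded function $v^2$ in place of $v$ yields $\E\langle v^2\rangle=\E[v(\sigma)^2]$. The second identity at $s=0$, where the term $s\Var(\cdot)$ vanishes, gives
\[
 g''(0)=\int_0^1\E\Var(v(\sigma)\,|\,\mathcal G_r)\dd r
 =\E[v(\sigma)^2]-\int_0^1\E\bigl[\E[v(\sigma)\,|\,\mathcal G_r]^2\bigr]\dd r.
\]

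\textbf{Step 4: compare.} Equating the two expressions for $g''(0)$ and cancelling $\E\langle v^2\rangle=\E[v(\sigma)^2]$ yields \eqref{eq:scalar-replicas}.

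The only delicate point is justifying the interchange of $\partial_y$ with the expectations and with the PDE solution. This is covered by the bounded mixed derivatives of $\Phi^y$ recorded after Lemma~\ref{lem:parisi-representation}, taking $b=v$ with $h=y$. If the corollary is needed for a general path in $\mathcal Q_\infty$, a final approximation of $\sfq$ by $C^1$ increasing paths would be used; this relies on the continuity in $\sfq$ of both sides, and I expect that continuity to be routine.
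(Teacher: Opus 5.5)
Your proposal is correct and follows the paper's proof essentially verbatim. You differentiate \eqref{eq:parisi-representation} for $e^{yv}\mu$ twice at $y=0$, read off both derivatives from \eqref{eq:terminal-response} at $s=0$, apply the first identity to $v^2$, and cancel the one-replica terms after expanding the conditional variance. Your closing remark about approximating general paths is unnecessary: the filtration $\mathcal G_s$, and hence the corollary, is only defined in the $C^1$ setting of Lemma~\ref{lem:parisi-representation}.
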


\begin{proof}
Differentiate \eqref{eq:parisi-representation} for the terminal
measure $e^{yv}\mu$. By \eqref{eq:terminal-response} at $s=0$,
\[
 \E\langle v(\sigma)\rangle=\E v(\sigma),\qquad
 \E\bigl[\langle v(\sigma)^2\rangle-\langle v(\sigma)\rangle^2\bigr]
 =\int_0^1\E\Var(v(\sigma)\,|\,\mathcal G_s)\dd s.
\]
Apply the first identity to $v^2$ and expand the conditional variance
in the second. The terms involving one replica cancel.
\end{proof}

\subsection{Proof of Lemma~\ref{lem:full-endpoint}}
\label{sec:full-endpoint}

Fix $0\leq t\leq T$ and $|\sfq|_{L^\infty}\leq Q$, and write $N=kn$ and
$\supp P_1=\{\tau_1,\ldots,\tau_M\}$. At $\theta=1$, the unrestricted
model is the model of size $N$. Restriction therefore gives
$\bar F_{1,k,n}(t,\sfq;\Sigma^{\mathrm{bal}}_{kn})\geq\bar F_N(t,\sfq)$.
In this proof, $\langle\cdot\rangle$ denotes the unrestricted Gibbs
measure of size $N$.

For each $\sigma$, keep its first block and change the fewest possible
spins in each other block to obtain a configuration $\sigma'$ with
$L_\ell(\sigma')=L_1(\sigma)$ for every $\ell$. The choice of
$\sigma'$ is deterministic and does not depend on the disorder. The
number of changed sites is
\[
 d(\sigma):=\#\{i:\sigma_{\cdot,i}\ne\sigma'_{\cdot,i}\}
 =\frac n2\sum_{\ell=2}^k\sum_{m=1}^M
 |L_\ell(\sigma)(\{\tau_m\})-L_1(\sigma)(\{\tau_m\})|.
\]
The law of $\sigma$ under $\E\langle\cdot\rangle$ is invariant under
permutations of the $N$ sites. Hence, conditionally on the empirical
distribution $L$ of all $N$ spins, it is uniform on the
configurations with empirical distribution $L$. Sampling without
replacement gives
\[
 \Var(L_\ell(\sigma)(\{\tau_m\})\,|\,L)
 \leq\frac{L(\{\tau_m\})}{n},
\]
and every block has conditional mean $L(\{\tau_m\})$. By Jensen's
inequality and $\Var(X-Y)\leq2\Var X+2\Var Y$,
\[
 \E\bigl[|L_\ell(\sigma)(\{\tau_m\})-L_1(\sigma)(\{\tau_m\})|
 \,|\,L\bigr]
 \leq2\sqrt{\frac{L(\{\tau_m\})}{n}}.
\]
Summing over $m$ and $\ell$ and using
$\sum_m\sqrt{L(\{\tau_m\})}\leq\sqrt M$, we obtain
\begin{equation}
 \E\langle d(\sigma)\rangle\leq(k-1)\sqrt{Mn}.
 \label{eq:full-endpoint-changed-sites}
\end{equation}

We now estimate the change in the Gibbs weight. Conditionally on the
cascade, the centered part of $H_N^{t,\sfq}$ has covariance
\[
 \mathcal C(\sigma,\alpha;\tau,\beta)
 =2tN\xi\left(\frac{\sigma\tau^\intercal}{N}\right)
  +2\sfq(\alpha\wedge\beta)\cdot\sigma\tau^\intercal.
\]
Since all columns have norm at most $1$,
\[
 \left|\frac{\sigma\tau^\intercal-\sigma'\tau^\intercal}{N}\right|
 \leq\frac{2d(\sigma)}N,\qquad
 |\mathcal C(\sigma,\alpha;\tau,\beta)
   -\mathcal C(\sigma',\alpha;\tau,\beta)|
 \leq Cd(\sigma),
\]
with $C$ depending only on $D,\xi,T,Q$. Let $(\tau,\beta)$ be an
independent replica. Gaussian integration by parts, including the
variance correction, gives
\[
\begin{aligned}
 &\E\langle H_N^{t,\sfq}(\sigma,\alpha)
             -H_N^{t,\sfq}(\sigma',\alpha)\rangle\\
 &=\frac12\E\bigl\langle
       \mathcal C(\sigma,\alpha;\sigma,\alpha)
       +\mathcal C(\sigma',\alpha;\sigma',\alpha)
       -2\mathcal C(\sigma,\alpha;\sigma',\alpha)\bigr\rangle\\
 &\quad-\E\bigl\langle
       \mathcal C(\sigma,\alpha;\tau,\beta)
       -\mathcal C(\sigma',\alpha;\tau,\beta)\bigr\rangle.
\end{aligned}
\]
The first term on the right is one half of the variance of an
increment. By the covariance bound, both terms are at most
$C\E\langle d(\sigma)\rangle$. Every atom of $P_1$ has positive mass,
so $\log(P_N(\{\sigma\})/P_N(\{\sigma'\}))\leq Cd(\sigma)$, with $C$
now depending also on $P_1$. Thus
\begin{equation}
 \E\left\langle
 H_N^{t,\sfq}(\sigma,\alpha)-H_N^{t,\sfq}(\sigma',\alpha)
 +\log\frac{P_N(\{\sigma\})}{P_N(\{\sigma'\})}
 \right\rangle
 \leq C\E\langle d(\sigma)\rangle.
 \label{eq:full-endpoint-weight-change}
\end{equation}

Fix the disorder and the cascade, and write $Z$ and $Z_{\mathrm{bal}}$
for the unrestricted and balanced partition functions. We use the law
of $(\sigma',\alpha)$ under the Gibbs measure as a trial measure for
$Z_{\mathrm{bal}}$. The Gibbs variational principle gives
\[
 \log\frac Z{Z_{\mathrm{bal}}}
 \leq\left\langle
 H_N^{t,\sfq}(\sigma,\alpha)-H_N^{t,\sfq}(\sigma',\alpha)
 +\log\frac{P_N(\{\sigma\})}{P_N(\{\sigma'\})}
 \right\rangle
 +\mathrm H(\sigma\,|\,\sigma',\alpha),
\]
where $\mathrm H(\cdot\,|\,\cdot)$ is the conditional Shannon entropy
under the Gibbs measure with fixed disorder. Indeed, the marginal law
of $\alpha$ is the same for $(\sigma,\alpha)$ and $(\sigma',\alpha)$,
so the relative entropies with respect to $\mathfrak R$ cancel. Since
$\sigma'$ is a function of $\sigma$, the remaining entropy difference
is $\mathrm H(\sigma\,|\,\alpha)-\mathrm H(\sigma'\,|\,\alpha)
=\mathrm H(\sigma\,|\,\sigma',\alpha)$.

Given $\sigma'$, the configuration $\sigma$ is determined by the set of
changed sites and the original spins at these sites. Put
\[
 \bar p=\frac1N\E\langle d(\sigma)\rangle,\qquad
 h(p)=-p\log p-(1-p)\log(1-p),
\]
with $0\log0=0$. By the chain rule for entropy and the concavity of
$h$,
\[
 \E\mathrm H(\sigma\,|\,\sigma',\alpha)
 \leq\E\sum_{i=1}^N
 h\bigl(\langle\mathbf1_{\{\sigma_{\cdot,i}\ne\sigma'_{\cdot,i}\}}\rangle\bigr)
 +\E\langle d(\sigma)\rangle\log M
 \leq Nh(\bar p)+N\bar p\log M.
\]
Together with \eqref{eq:full-endpoint-weight-change}, this gives
\[
 0\leq
 \bar F_{1,k,n}(t,\sfq;\Sigma^{\mathrm{bal}}_{kn})-\bar F_N(t,\sfq)
 \leq C\bar p+h(\bar p)+\bar p\log M.
\]
By \eqref{eq:full-endpoint-changed-sites}, $\bar p\leq\sqrt{M/n}$. If
$n\geq M$, we use $h(p)\leq p\log(e/p)$ and the fact that
$p\mapsto p\log(e/p)$ is increasing on $(0,1]$ to obtain the bound
$C\log(n+1)/\sqrt n$. If $n<M$, we use $h\leq\log2$ and enlarge $C$.
All constants are independent of $n$ and $k$.

\subsection{Proof of Lemma~\ref{lem:block-endpoint}}
\label{sec:block-endpoint}

Fix $0\leq t\leq T$ and $|\sfq|_{L^\infty}\leq Q$, and suppose first
that $\sfq$ is smooth. At $\theta=0$, the terminal function of the
unrestricted model is a sum of functions of disjoint blocks. By the
additivity of the Parisi equation,
\[
 \bar F_{0,k,n}(t,\sfq)=\bar F_n(t,\sfq),\qquad
 \bar F_{0,k,n}(t,\sfq;\Sigma^{\mathrm{bal}}_{kn}(L))
 =\bar F_n(t,\sfq;\Sigma_n(L)).
\]
Since $\Sigma^{\mathrm{bal}}_{kn}(L)\subseteq\Sigma^{\mathrm{bal}}_{kn}$,
\begin{equation}
 \bar F_n(t,\sfq)
 \leq\bar F_{0,k,n}(t,\sfq;\Sigma^{\mathrm{bal}}_{kn})
 \leq\min_{L\in\mathcal L_n}\bar F_n(t,\sfq;\Sigma_n(L)).
 \label{eq:block-sandwich}
\end{equation}
We shall prove that, for $0<\eta\leq1/2$,
\begin{equation}
 0\leq\min_{L\in\mathcal L_n}\bar F_n(t,\sfq;\Sigma_n(L))
       -\bar F_n(t,\sfq)
 \leq C\eta+\frac{\log|\mathcal L_n|}{n\eta}.
 \label{eq:type-bound}
\end{equation}

Note that $g\mapsto H_n(\sigma;g)$ is linear.
From now on, let $g$ be a standard Gaussian vector.
Let $\Phi$ and $\Phi_L$ be the Parisi solutions for the
full and restricted models. Their terminal values satisfy
$\Phi(1,x;g)=\log\sum_L e^{\Phi_L(1,x;g)}$. For $s>0$, set
\[
 \widehat\Phi(s,x;g)=\frac1s\log\sum_{L\in\mathcal L_n}
 e^{s\Phi_L(s,x;g)},\qquad
 p_L=\frac{e^{s\Phi_L}}{\sum_{L'}e^{s\Phi_{L'}}}.
\]
The factor $s$ in the exponent is chosen so that the quadratic terms in
the gradients cancel. A direct computation gives
\[
 \partial_s\widehat\Phi
 +\sum_{i=1}^n\dot\sfq\cdot\nabla^2_{x_{\cdot,i}}\widehat\Phi
 +s\,\dot\sfq\cdot\bigl(\nabla_x\widehat\Phi(\nabla_x\widehat\Phi)^\intercal\bigr)
 =\frac1{s^2}\sum_L p_L\log p_L\leq0.
\]
The right side is minus the entropy of the weights $(p_L)$, divided
by $s^2$. Since $\widehat\Phi(1)=\Phi(1)$, a comparison argument on
$[\eta,1]$ gives
\[
 \Phi(\eta,x;g)
 \leq\frac1\eta\log\sum_L e^{\eta\Phi_L(\eta,x;g)}.
\]
Indeed, $\widehat\Phi-\Phi$ satisfies a linear backward equation with
bounded drift, zero terminal value, and a nonpositive source term,
which is bounded on $[\eta,1]$. It\^o's formula with localization shows
that $\widehat\Phi-\Phi\geq0$.

To pass from time $\eta$ to time $0$, let $B$ be a standard
$D\times n$ Brownian motion independent of $(g,z_0)$, and set
\[
 Y_s=\sqrt{2\sfq(0)}z_0+
       \int_0^s\sqrt{2\dot\sfq(r)}\dd B_r.
\]
For $\Psi=\Phi$ or $\Psi=\Phi_L$, It\^o's formula along this
driftless process gives
\[
 0\leq\E\Psi(0,Y_0;g)-\E\Psi(\eta,Y_\eta;g)\\
 =\E\int_0^\eta r\sum_{i=1}^n
 |\sqrt{\dot\sfq(r)}\nabla_{x_{\cdot,i}}\Psi(r,Y_r;g)|^2\dd r
 \leq n\eta\tr\sfq(1).
\]
This estimate uses the bound on $\sfq$ but not on its derivative.

The random variable $Y_\eta$ has the law of $\sqrt{2\sfq(\eta)}z$,
where $z$ is standard Gaussian and independent of $g$. The function
$(g,z)\mapsto\Phi_L(\eta,\sqrt{2\sfq(\eta)}z;g)$ is Lipschitz, with
squared Lipschitz constant at most
\[
 2n\Bigl(t\sup_{|a|\leq1}|\xi(a)|+Q\Bigr).
\]
The derivative in $z$ is controlled by the bound on the gradient in
each column. By \eqref{eq:terminal-response}, the derivative in $g$ is
a conditional average of terminal coefficient vectors, each of squared
norm at most $2tn\sup_{|a|\leq1}|\xi(a)|$. Jensen's inequality and
Gaussian concentration give
\[
 \E\left[\frac1\eta\log\sum_L
                 e^{\eta\Phi_L(\eta,Y_\eta;g)}\right]
 \leq\max_L\E\Phi_L(\eta,Y_\eta;g)
       +\frac{\log|\mathcal L_n|}{\eta}+Cn\eta.
\]
Combining these bounds and using $\tr\sfq(1)\leq\sqrt D\,Q$, we obtain
\begin{equation}
 \E\Phi(0,Y_0;g)
 \leq\max_L\E\Phi_L(0,Y_0;g)
       +\frac{\log|\mathcal L_n|}{\eta}+Cn\eta.
 \label{eq:initial-comparison}
\end{equation}
By the Parisi representation, this is \eqref{eq:type-bound}. Since
$|\mathcal L_n|\leq(n+1)^{M}$ with $M=\#\supp P_1$, the choice
$\eta=\tfrac12\sqrt{\log(n+1)/n}$ proves the lemma for smooth
$\sfq$. A bounded path in $\mathcal Q_\infty$ is the limit in $L^1$ of
smooth increasing paths with the same bound $Q$, and
Lemma~\ref{lem:restricted-continuity} extends the result to all such
paths.

\subsection{Reduction to block overlaps}
\label{sec:block-overlaps}

Fix $0\leq t\leq T$, and suppose first that $\sfq$ is smooth,
increasing, and bounded by $Q$. For $0<\varepsilon\leq1$, set
\[
 \sfq^\varepsilon(s)=\sfq(s)+\varepsilon(1+s)I_D.
\]
Then $\dot\sfq^\varepsilon\succeq\varepsilon I_D$ and
\begin{equation}
 \int_0^1\|\sfq^\varepsilon(s)-\sfq(s)\|_{\mathrm{op}}\dd s
 =\frac32\varepsilon.
 \label{eq:q-eps-bound}
\end{equation}
By Lemma~\ref{lem:restricted-continuity},
\begin{equation}
 |\bar F_{1,k,n}(t,\sfq;\Sigma^{\mathrm{bal}}_{kn})
       -\bar F_{0,k,n}(t,\sfq;\Sigma^{\mathrm{bal}}_{kn})|
       \leq3\varepsilon+\int_0^1
 \left|\frac{\dd}{\dd\theta}\bar F_{\theta,k,n}
       (t,\sfq^\varepsilon;\Sigma^{\mathrm{bal}}_{kn})\right|\dd\theta.
\label{eq:two-terms}
\end{equation}

For two replicas $\sigma^1,\sigma^2$, write
\[
 \overlap^{(\ell)}=\frac1n
 \sigma^{1,(\ell)}(\sigma^{2,(\ell)})^\intercal,\qquad
 \bar\overlap=\frac1k\sum_{\ell=1}^k\overlap^{(\ell)}.
\]
For any restriction $\Sigma$ of positive reference measure mass, the variance
correction in \eqref{eq:restricted-hamiltonian} cancels the diagonal
terms in Gaussian integration by parts. Thus, for $0<\theta<1$,
\begin{equation}
 \frac{\dd}{\dd\theta}\bar F_{\theta,k,n}(t,\sfq;\Sigma)
 =t\E\Bigl\langle
 \xi(\bar\overlap)-\frac1k\sum_{\ell=1}^k\xi(\overlap^{(\ell)})
 \Bigr\rangle_{\theta,k,n}^{t,\sfq;\Sigma}.
 \label{eq:interpolation-derivative}
\end{equation}
Here the brackets denote the Gibbs measure with the indicated
restriction. For $\Sigma=\Sigma^{\mathrm{bal}}_{kn}$, we abbreviate them
to $\langle\cdot\rangle_{\theta,k,n}^{t,\sfq}$.

The linear terms in Taylor's formula at $\bar\overlap$ cancel, so
\[
 \Bigl|\xi(\bar\overlap)-\frac1k\sum_{\ell=1}^k\xi(\overlap^{(\ell)})\Bigr|
 \leq\frac1{2k}
 \Bigl(\sup_{|a|\leq1}\|\nabla^2\xi(a)\|_{\mathrm{op}}\Bigr)
 \sum_{\ell=1}^k|\overlap^{(\ell)}-\bar\overlap|^2.
\]
Since
\[
 \sum_{\ell=1}^k|\overlap^{(\ell)}-\bar\overlap|^2
 =\frac1{2k}\sum_{\ell,\ell'=1}^k
 |\overlap^{(\ell)}-\overlap^{(\ell')}|^2
\]
and the balanced model is invariant under permutations of the blocks,
\begin{equation}
 \Bigl|\frac{\dd}{\dd\theta}\bar F_{\theta,k,n}
                (t,\sfq;\Sigma^{\mathrm{bal}}_{kn})\Bigr|
 \leq\frac t4
 \Bigl(\sup_{|a|\leq1}\|\nabla^2\xi(a)\|_{\mathrm{op}}\Bigr)
 \E\langle|\overlap^{(1)}-\overlap^{(2)}|^2\rangle_{\theta,k,n}^{t,\sfq}.
 \label{eq:derivative-bound}
\end{equation}

We express the last expectation using the Parisi diffusion. Realize
$H_{\theta,k,n}(\sigma;g)$ on
$\Sigma^{\mathrm{bal}}_{kn}\cap(\supp P_1)^{kn}$ by the positive
square root of its covariance matrix and a standard Gaussian vector
$g$. Conditionally on $g$, we use \eqref{eq:parisi-pde} with $N=kn$
and terminal value
\begin{equation}
\begin{aligned}
 \Phi(1,x;g)=\log\int_{\Sigma^{\mathrm{bal}}_{kn}}
 \exp\biggl(&\sqrt{2t}\,H_{\theta,k,n}(\sigma;g)+x\cdot\sigma\\
 &-tkn\,\xi\left(\frac{\sigma\sigma^\intercal}{kn}\right)
 -\sfq(1)\cdot\sigma\sigma^\intercal\biggr)\dd P_{kn}(\sigma).
\end{aligned}
\label{eq:balanced-terminal}
\end{equation}
We take $z_0$ and $B$ in \eqref{eq:parisi-diffusion} independent of
$g$, and sample the terminal configuration as in
Section~\ref{sec:parisi-equation}. Let $(\mathcal G_s)$ be the
filtration generated by $g$, $z_0$ and $B$, and denote the joint law
and its expectation by $\mathbb P_\theta$ and $\E_\theta$.

For each $s$, let $\sigma_s^1,\sigma_s^2$ be independent samples from
the conditional law of $\sigma$ given $\mathcal G_s$, and set
$\overlap_s^{(\ell)}=n^{-1}\sigma_s^{1,(\ell)}(\sigma_s^{2,(\ell)})^\intercal$.
Expectations include these conditional samples. We never need a
coupling of the samples at different times.

\begin{lem}
\label{lem:replicas}
We have
\begin{equation}
 \E\langle|\overlap^{(1)}-\overlap^{(2)}|^2\rangle_{\theta,k,n}^{t,\sfq}
 =\int_0^1\E_\theta|\overlap_s^{(1)}-\overlap_s^{(2)}|^2\dd s.
 \label{eq:replica-identity}
\end{equation}
\end{lem}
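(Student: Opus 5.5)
We reduce \eqref{eq:replica-identity} to Corollary~\ref{cor:correlation-identity}, applied conditionally on $g$. The plan is to expand the squared norm so that every term is a product of one function of $\sigma^1$ and one function of $\sigma^2$, to which the corollary applies with a suitable scalar $v$.

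\emph{Step 1: expansion.} Write
\[
 |\overlap^{(1)}-\overlap^{(2)}|^2=\sum_{a,b=1}^D\bigl(\overlap^{(1)}_{ab}-\overlap^{(2)}_{ab}\bigr)^2 .
\]
For fixed $(a,b)$ and $\ell$,
\[
 \overlap^{(\ell)}_{ab}=\frac1n\sum_{i\in\text{block }\ell}\sigma^1_{a,i}\sigma^2_{b,i}.
\]
Therefore
\[
 \bigl(\overlap^{(1)}_{ab}-\overlap^{(2)}_{ab}\bigr)^2
 =\frac1{n^2}\sum_{i,j}c_{ij}\,\sigma^1_{a,i}\sigma^1_{a,j}\,\sigma^2_{b,i}\sigma^2_{b,j},
\]
with $c_{ij}=\epsilon_i\epsilon_j$. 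Here $\epsilon_i=1$ on block $1$, $\epsilon_i=-1$ on block $2$, and $\epsilon_i=0$ otherwise.

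\emph{Step 2: reduction to squares.} Each summand is a bilinear expression $u(\sigma^1)w(\sigma^2)$ with $u,w$ bounded on $\supp\mu$. By polarization,
\[
 uw=\tfrac14\bigl[(u+w)^2-(u-w)^2\bigr],
\]
and the replica expectation is symmetric under exchange of replicas. So it suffices to establish, for every bounded Borel $v$,
\[
 \E\langle v(\sigma^1)v(\sigma^2)\rangle=\int_0^1\E_\theta\bigl[v(\sigma^1_s)v(\sigma^2_s)\bigr]\dd s,
\]
and then extend by bilinearity to $\E\langle u(\sigma^1)w(\sigma^2)\rangle$. The right-hand side equals $\int_0^1\E_\theta\bigl[\E[u(\sigma)\,|\,\mathcal G_s]\,\E[w(\sigma)\,|\,\mathcal G_s]\bigr]\dd s$, because $\sigma^1_s,\sigma^2_s$ are conditionally independent draws from the law of $\sigma$ given $\mathcal G_s$. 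The identity for a single $v$ is exactly \eqref{eq:scalar-replicas}, once the right side is rewritten the same way.

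\emph{Step 3: conditioning on $g$.} Corollary~\ref{cor:correlation-identity} is stated for a deterministic terminal measure $\mu$. Here $\mu$ depends on $g$ through \eqref{eq:balanced-terminal}, with restriction to $\Sigma^{\mathrm{bal}}_{kn}$ and the smooth path $\sfq$ (or $\sfq^\varepsilon$). We apply the corollary conditionally on $g$; this is legitimate because $z_0,B$ are independent of $g$ and the conditioning on $\mathcal G_s$ includes $g$. We then take expectation in $g$. The Gibbs measure $\langle\cdot\rangle_{\theta,k,n}^{t,\sfq}$ coincides with the Gibbs measure in \eqref{eq:parisi-representation} for this $\mu$, by the same identification used for the restricted free energy. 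Summing over $(a,b)$ and $(i,j)$ then yields \eqref{eq:replica-identity}.

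\emph{Main obstacle.} The only delicate point is the measurability and independence setup in Step~3: the filtration must contain $g$ at time $0$ so that conditional laws given $\mathcal G_s$ agree with those of the fixed-$g$ diffusion. Everything else is algebra, since boundedness of $\sigma$ makes all test functions bounded.
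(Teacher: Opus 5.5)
Your proof is correct and is essentially the paper's argument: you apply Corollary~\ref{cor:correlation-identity} conditionally on $g$ to the bilinear pieces of the expanded squared norm, then average over $g$ and sum. The only difference is cosmetic. The paper contracts the $D$ spin coordinates first, taking $v(\sigma)=\sigma^{(\ell)}_{\cdot,i}\cdot\sigma^{(\ell')}_{\cdot,j}$ with coefficients $n^{-2}(-1)^{\ell+\ell'}$, so every term already has the diagonal form $v(\sigma^1)v(\sigma^2)$ and your polarization step (valid, via replica exchangeability) is not needed.
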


\begin{proof}
Conditionally on $g$, apply Corollary~\ref{cor:correlation-identity}
to $v(\sigma)=\sigma^{(\ell)}_{\cdot,i}\cdot\sigma^{(\ell')}_{\cdot,j}$
for $1\leq i,j\leq n$ and $\ell,\ell'\in\{1,2\}$. Average over $g$ and
sum with coefficients $n^{-2}(-1)^{\ell+\ell'}$. Expanding the squared
Frobenius norm and using the conditional independence of the replicas
gives the identity.
\end{proof}

The remaining estimate is proved in Section~\ref{sec:overlap-proof}.

\begin{prop}
\label{prop:block-overlap}
Suppose that $P_1$ has finite support in the unit ball of $\R^D$. Fix
$\varepsilon>0$, and suppose that $\sfq\in C^1([0,1];S^D_+)$ satisfies
$\dot\sfq(s)\succeq\varepsilon I_D$ for every $s\in[0,1]$. There exists
$C_D>0$, depending only on $D$, such that for all $n\geq1$, $k\geq2$,
$t\geq0$, and $\theta\in[0,1]$,
\begin{equation}
 \int_0^1\E_\theta|\overlap_s^{(1)}-\overlap_s^{(2)}|^2\dd s
 \leq C_D(n\varepsilon)^{-1/3}.
 \label{eq:block-overlap-bound}
\end{equation}
\end{prop}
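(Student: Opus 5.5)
We will bound the block-overlap difference at each cascade time $s$ by splitting it into a fluctuation part and a mean part. Conditionally on $\mathcal G_s$, the replicas $\sigma_s^1,\sigma_s^2$ are i.i.d.\ with mean $m_s$ and covariance $V_s$. Write $m_s^{(\ell)}$ for the block $\ell$ of $m_s$. Then
\[
 \E_\theta\bigl[|\overlap_s^{(1)}-\overlap_s^{(2)}|^2\,\big|\,\mathcal G_s\bigr]
 =\Bigl|\tfrac1n\bigl(m_s^{(1)}(m_s^{(1)})^\intercal-m_s^{(2)}(m_s^{(2)})^\intercal\bigr)\Bigr|^2
 +(\text{terms involving }V_s).
\]
The $V_s$ terms are sums of $n^{-2}\tr(V_s^{(\ell\ell')}V_s^{(\ell\ell')})$ and of cross terms $n^{-2}\,m\cdot V m$ type. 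Using $|m|\leq1$ per column, they are bounded by $C_D n^{-2}\bigl(\tr(\Pi_\ell V_s\Pi_\ell)^2+n\|\Pi_\ell V_s\Pi_\ell\|\bigr)$, where $\Pi_\ell$ is the projection onto block $\ell$. To control these, we use \eqref{eq:covariance-identity}, which gives $sV_s\preceq K_s$. Then \eqref{eq:parisi-covariance-energy} with $\Pi=\Pi_\ell$ gives $\int_0^1\E\tr(\Pi_\ell K_s^2)\dd s\leq n/(2\varepsilon)$. Hence $\int_\delta^1 n^{-2}\E|\Pi_\ell V_s\Pi_\ell|_{F}^2\,\dd s\lesssim (n\varepsilon\delta^2)^{-1}$. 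The linear-in-$V$ cross terms are handled by Cauchy--Schwarz, giving $(n\varepsilon\delta^2)^{-1/2}$-type contributions. On $[0,\delta]$ we use the trivial bound $4$ on the integrand, which costs $C\delta$.

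The main obstacle is the mean term, i.e.\ showing that $n^{-1}m_s^{(1)}(m_s^{(1)})^\intercal$ and $n^{-1}m_s^{(2)}(m_s^{(2)})^\intercal$ are close. Here we use the balanced structure. The swap permutation $\pi_{12}$ exchanging blocks $1$ and $2$ (matching identical spins) fixes each balanced $\sigma$. For this we prefer to phrase things through the block-exchange map on the field $x$. The terminal function \eqref{eq:balanced-terminal} is invariant under exchanging the block-$1$ and block-$2$ columns of $x$ together with the corresponding transformation of $g$: the law of $H_{\theta,k,n}$ is block-exchangeable and $P_{kn}$ restricted to the balanced set is exchangeable. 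So $\Phi(s,\cdot;\cdot)$ is invariant under a measure-preserving linear involution $S$ of $(x,g)$. Following Aronow and Lopatto, we use the joint convexity of $\Phi$ in $(x,g)$ noted after Lemma~\ref{lem:martingale}. The conditional law of $(X_s,g)$ has a log-concave structure, and the Brascamp--Lieb inequality bounds the variance of the function $F=n^{-1}\bigl(|m^{(1)}|^2-|m^{(2)}|^2\bigr)$, taken entrywise as a matrix, by $\E\langle\nabla F,(\text{Hess})^{-1}\nabla F\rangle$. The gradient of $F$ involves $K_s$ applied to $m$, and is therefore again controlled by $\E\tr(\Pi_\ell K_s^2)/n$. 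The symmetry $S$ shows that $\E F=0$. So $\E|F|^2\lesssim$ a Brascamp--Lieb bound of order $\bigl(n^{-2}\E|\Pi K_s m|^2\bigr)\cdot(\text{inverse curvature})$. The inverse curvature comes from the Gaussian part, of order $s\,\varepsilon^{-1}$ or $1$. Integrating in $s$ and applying \eqref{eq:parisi-covariance-energy} again, we expect $\int_\delta^1\E|F|^2\,\dd s\lesssim (n\varepsilon\delta)^{-1}$ or similar. The delicate points are identifying the right log-concave measure (the Gaussian law of $(z_0,B,g)$ tilted by $\Phi$, using the convexity from Lemma~\ref{lem:martingale}) and checking that its Hessian lower bound scales correctly.

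Collecting the pieces gives $\int_0^1\E_\theta|\overlap_s^{(1)}-\overlap_s^{(2)}|^2\,\dd s\leq C_D\bigl(\delta+(n\varepsilon\delta^2)^{-1/2}+(n\varepsilon\delta^2)^{-1}\bigr)$. Optimizing with $\delta=(n\varepsilon)^{-1/3}$ yields \eqref{eq:block-overlap-bound}. The bound is trivial when $n\varepsilon<1$ after enlarging $C_D$. All constants depend only on $D$, because the per-column norm bound and the Frobenius-to-trace comparisons only involve $D$.
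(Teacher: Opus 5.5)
Your overall decomposition and your fluctuation estimate follow the paper, apart from a slip noted at the end. The mean part, however, has a genuine gap.

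First, the measure you propose to feed into Brascamp--Lieb is not log-concave. Under $\mathbb P_\theta$, the law of $(g,z_0,X)$ has density $\exp\bigl(\Phi(1,X_1;g)-\int_0^1\Phi(r,X_r;g)\dd r\bigr)$ with respect to the Gaussian reference law $\Gamma$. For the path up to time $s$ the density is $\exp\bigl(s\Phi(s,X_s;g)-\int_0^s\Phi(r,X_r;g)\dd r\bigr)$. Since $\Phi$ is jointly convex in $(x,g)$, the term $\Phi(1,X_1;g)$ enters the potential with the wrong sign. This law is a mixture over terminal configurations and can be multimodal.

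The missing idea, from Aronow--Lopatto, is to condition on the terminal configuration $\sigma=\tau$. The Gibbs density then replaces $\Phi(1,X_1;g)$ by the affine function $\sqrt{2t}\,H_{\theta,k,n}(\tau;g)+X_1\cdot\tau$. Only then does joint convexity give $\nabla^2\mathcal V=I+\int_0^1\nabla^2_\zeta\Phi(r,X_r;g)\dd r\succeq I$, as in \eqref{eq:conditional-density}. This in turn forces you to show that the observable has mean zero \emph{conditionally on} $\sigma=\tau$ for every balanced $\tau$, because $\E F^2=\E\Var(F\,|\,\sigma)+\E\bigl[\E[F\,|\,\sigma]^2\bigr]$. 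That requires the matching-spin permutation fixing $\tau$. The block exchangeability of $(x,g)$ that you switch to only gives the unconditional identity $\E F=0$, which is not enough.

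Second, even on a log-concave law, using only the Gaussian part of the curvature gives no decay. Your gradient bound $|\nabla F|^2\lesssim n^{-1}\tr(\Pi K_s^2)$ is what $|K_sT_am_s|^2\leq 2n\tr(\Pi K_s^2)$ yields. Combined with \eqref{eq:parisi-covariance-energy}, it gives $\int_0^1 n^{-1}\E_\theta\tr(\Pi K_s^2)\dd s\leq 1/\varepsilon$, not $(n\varepsilon\delta)^{-1}$.

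The paper instead absorbs one factor of the Hessian into the curvature $\int_0^1\nabla^2_\zeta\Phi\,\dd r$, via Cauchy--Schwarz for the joint Hessian in $(x,g)$. Because that curvature contains a time integral rather than the Hessian at the fixed time $s$, the observable must be averaged over $r\in[s-\delta,s]$, at a cost $\delta^{-2}$. The paper also uses the $\sigma$-centered observable $U(r)=-\frac1{2n}(\sigma-m_r)^\intercal T_a(\sigma-m_r)$. After conditioning on $\mathcal G_r$, the remaining factor is then $\tr(K_rT_aV_rT_a)\leq r^{-1}\tr(\Pi K_r^2)$ rather than $|K_rT_am_r|^2$. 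This yields $\int_{2\delta}^1\E_\theta\overline U(s)^2\dd s\leq (n\varepsilon\delta^2)^{-1}$. Then \eqref{eq:observable-mean} and the martingale property of $m$ return to $\E_\theta[a\cdot\Delta_s\,|\,\mathcal G_s]$ at a cost $O(\delta)$.

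Finally, the slip in your fluctuation part: Cauchy--Schwarz with $\int_\delta^1 s^{-2}\dd s\leq\delta^{-1}$ gives $(n\varepsilon\delta)^{-1/2}$ for the terms linear in $V_s$. With your stated $(n\varepsilon\delta^2)^{-1/2}$, the choice $\delta=(n\varepsilon)^{-1/3}$ would only give $(n\varepsilon)^{-1/6}$.
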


We apply \eqref{eq:derivative-bound}, Lemma~\ref{lem:replicas}, and
Proposition~\ref{prop:block-overlap} to $\sfq^\varepsilon$ in
\eqref{eq:two-terms}. This gives
\[
 |\bar F_{1,k,n}(t,\sfq;\Sigma^{\mathrm{bal}}_{kn})
       -\bar F_{0,k,n}(t,\sfq;\Sigma^{\mathrm{bal}}_{kn})|
\leq
3\varepsilon+\frac{C_Dt}{4}
 \Bigl(\sup_{|a|\leq1}\|\nabla^2\xi(a)\|_{\mathrm{op}}\Bigr)
 (n\varepsilon)^{-1/3}.
\]
We take $\varepsilon=n^{-1/4}$. The resulting bound is $Cn^{-1/4}$,
uniformly in $n$, $k$, $t\leq T$, and $\sfq$. Smooth approximation and
Lemma~\ref{lem:restricted-continuity} extend it to all increasing paths
bounded by $Q$. Proposition~\ref{prop:remove-constraints} and
$\log(n+1)/\sqrt n\leq Cn^{-1/4}$ now prove Proposition~\ref{prop:rate}.

\section{Proof of Proposition~\ref{prop:block-overlap}}
\label{sec:overlap-proof}

Fix the parameters of Proposition~\ref{prop:block-overlap}. Set
$m_s=\E_\theta[\sigma\,|\,\mathcal G_s]$,
$V_s=\Cov_\theta(\sigma\,|\,\mathcal G_s)$ and
$K_s=\nabla_x^2\Phi(s,X_s;g)$, and let $\Pi$ be the orthogonal
projection onto the coordinates of the first two blocks. The Hessian
$K_s$ acts on all $kn$ sites. Lemma~\ref{lem:martingale}, applied
conditionally on $g$, gives
\begin{equation}
 sV_s\preceq K_s,
 \qquad
 \int_0^1\E_\theta\tr(\Pi K_s^2)\dd s
 \leq\frac n\varepsilon,
 \label{eq:covariance-energy}
\end{equation}
where we used $|\Pi\sigma|^2\leq2n$.

Set $\Delta_s=\overlap_s^{(1)}-\overlap_s^{(2)}$. Then, the law of total variance yields 
the decomposition
\begin{equation}
 \int_0^1\E_\theta|\Delta_s|^2\dd s
 =\int_0^1\E_\theta|\E_\theta[\Delta_s\,|\,\mathcal G_s]|^2\dd s\\
 +\int_0^1\E_\theta
 |\Delta_s-\E_\theta[\Delta_s\,|\,\mathcal G_s]|^2\dd s.
\label{eq:overlap-decomposition}
\end{equation}
We estimate the two terms separately.

\begin{lem}
\label{lem:mean-part}
There exists $C_D>0$, depending only on $D$, such that for every
$0<\delta<1/4$,
\[
 \int_0^1\E_\theta|\E_\theta[\Delta_s\,|\,\mathcal G_s]|^2\dd s
 \leq C_D\left(\delta+\frac1{n\varepsilon\delta^2}\right).
\]
\end{lem}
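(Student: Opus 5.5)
The plan is to reduce the conditional mean to a function of the Parisi means $m_s$, to use block exchangeability to show that this function has mean zero, and then to bound its variance by the Hessian energy \eqref{eq:covariance-energy}.

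\emph{Reduction to block means.} Conditionally on $\mathcal G_s$, the samples $\sigma_s^1,\sigma_s^2$ are independent with mean $m_s$. Writing $m_s^{(\ell)}$ for the $\ell$-th block of columns of $m_s$, we get
\[
 \E_\theta[\Delta_s\,|\,\mathcal G_s]=A_s^{(1)}-A_s^{(2)},\qquad A_s^{(\ell)}=\frac1n m_s^{(\ell)}(m_s^{(\ell)})^\intercal .
\]
The terminal function \eqref{eq:balanced-terminal} is built from the balanced set and the interpolated Hamiltonian. The law of $H_{\theta,k,n}$ and the set $\Sigma^{\mathrm{bal}}_{kn}$ are invariant under the site permutation exchanging blocks $1$ and $2$ in order, and so is the law of $(z_0,B)$. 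Hence $(A_s^{(1)},A_s^{(2)})$ is exchangeable under $\mathbb P_\theta$, so $\E_\theta[A_s^{(1)}-A_s^{(2)}]=0$. It follows that
\[
 \E_\theta|\E_\theta[\Delta_s\,|\,\mathcal G_s]|^2=\Var_\theta\bigl(A_s^{(1)}-A_s^{(2)}\bigr)\leq 2\Var_\theta(A_s^{(1)})+2\Var_\theta(A_s^{(2)}),
\]
with the variance taken entrywise and summed. For $s\leq\delta$ we bound the integrand by a constant, since $|A_s^{(\ell)}|\leq1$. This contributes $C\delta$, and it remains to show that $\int_\delta^1\Var_\theta(A^{(\ell)}_s)\dd s\leq C_D/(n\varepsilon\delta^2)$.

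\emph{Variance of $A_s^{(\ell)}$.} I would view $m_s=\nabla_x\Phi(s,X_s;g)$ as a function of the Gaussian inputs $(g,z_0,B)$ and follow the Brascamp--Lieb argument of Aronow and Lopatto. The key structural fact is that $\Phi$ is jointly convex in $(x,g)$, which is the remark after Lemma~\ref{lem:martingale}; this gives a log-concave structure whose covariance is dominated by the inverse of a Hessian.
\begin{itemize}
 \item The noise part, driven by $B$ and $z_0$, is handled through the martingale decomposition \eqref{eq:mean-martingale}. Its quadratic variation is at most $Cn^{-2}|\Pi m_s|^2\,\tr(\Pi K_s^2)\,\|\dot\sfq\|_{\mathrm{op}}$. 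Since $|\Pi m_s|^2\leq 2n$, this is of order $n^{-1}\tr(\Pi K_s^2)$. Integrating in $s$ with \eqref{eq:covariance-energy} then gives a contribution of order $1/\varepsilon$ times $n^{-1}$.
 \item The disorder part is the $g$-dependence of the starting value. Here the derivative of $m_s$ in $g$ is a mixed derivative of $\Phi$, and by \eqref{eq:terminal-response} it is a conditional covariance between $\sigma$ and the Hamiltonian coefficients. I plan to bound it using $sV_s\preceq K_s$, so that $V_s\preceq K_s/s$.
\end{itemize}
On $[\delta,1]$, converting between $V_s$ and $K_s$ twice in this way costs the factor $\delta^{-2}$. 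Combined with the energy bound, this yields $C_D/(n\varepsilon\delta^2)$.

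\emph{Main obstacle.} The hard step is the disorder part. A naive Poincaré inequality in $g$ loses a factor $n$, because the Hamiltonian coefficients have squared norm of order $n$. To avoid this, I would first try to apply Brascamp--Lieb to the joint log-concave law in $(x,g)$ given by the convexity of $\Phi$. This should bound the $g$-fluctuation of $\Pi m_s$ by the same trace $\tr(\Pi K_s^2)/s^2$, rather than by the norm of the coefficients, and it is exactly where the exchange of two blocks of a balanced configuration and the restriction $s\geq\delta$ are needed.
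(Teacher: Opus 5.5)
\textbf{There is a genuine gap.} Your reduction to $A_s^{(1)}-A_s^{(2)}$ is fine, and so is the observation that its mean vanishes by block exchangeability. The problem is the next step, $\E_\theta|\E_\theta[\Delta_s\,|\,\mathcal G_s]|^2\leq 2\Var_\theta(A_s^{(1)})+2\Var_\theta(A_s^{(2)})$. It discards exactly the cancellation between blocks that the lemma is about, and what remains is not small in general.

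A bound $\int_\delta^1\Var_\theta(A_s^{(1)})\dd s\leq C_D/(n\varepsilon\delta^2)$, uniform in $t$ and $\xi$, together with Lemma~\ref{lem:fluctuation-part}, would say that the block overlap self-averages given the cascade overlap at every $(t,\sfq)$ with $\dot\sfq\succeq\varepsilon I_D$. Nothing in the setting forces this. For instance, take $D=2$, $P_1$ uniform on $\{\pm e_1,\pm e_2\}$, $\xi(a)=\beta^2a_{11}a_{22}$ and $t$ large. The variance correction penalizes configurations with comparable numbers of $e_1$- and $e_2$-spins, so one expects two symmetric phases. In them $A_s^{(1)}$ is close to $\mathrm{diag}(a,b)$ or to $\mathrm{diag}(b,a)$ with $a\neq b$, and the disorder selects each with probability about $1/2$. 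Then $\Var_\theta(A_s^{(1)})$ stays of order one, while $A_s^{(1)}-A_s^{(2)}$ is small.

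Your noise-part estimate is consistent with this, once the arithmetic is corrected. With $|\Pi m_s|^2\leq 2n$ and $\int_0^1\E_\theta\tr(\Pi K_s^2)\dd s\leq n/\varepsilon$ from \eqref{eq:covariance-energy}, the integrated quadratic variation you write down is of order $\|\dot\sfq\|_{\mathrm{op}}/\varepsilon$, not $1/(n\varepsilon)$: the factor $n$ in the energy bound was dropped. Also, once you have split off the individual variances, the block exchange you invoke for the disorder part has nothing left to act on.

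\textbf{What is missing.} You need to use the symmetry conditionally on the terminal configuration, because that is where the log-concave structure lives. Under $\mathbb P_\theta$ the inputs $(g,z_0,B)$ are simply Gaussian. The joint convexity of $\Phi$ in $(x,g)$ yields a log-concave law only after conditioning on $\sigma=\tau$: by Girsanov, the density is proportional to $\exp(\sqrt{2t}H_{\theta,k,n}(\tau;g)+X_1\cdot\tau-\int_0^1\Phi(r,X_r;g)\dd r)$, and the potential has Hessian $\succeq I$.

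The paper therefore uses the observable $U(r)=-\frac1{2n}(\sigma-m_r)^\intercal T_a(\sigma-m_r)$, where $T_a$ acts as $a$ on block 1 and $-a$ on block 2. Balance gives $\sigma^\intercal T_a\sigma=0$. Its $\mathcal G_s$-conditional mean equals $\frac12a\cdot\E_\theta[\Delta_s\,|\,\mathcal G_s]$ up to $\frac1{2n}(m_s-m_r)^\intercal T_a(m_s-m_r)$. The permutation exchanging matching spins of the two blocks of $\tau$ fixes $\tau$ and flips the sign of $T_a$, which gives $\E_\theta[U(r)\,|\,\sigma]=0$.

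Brascamp--Lieb is then applied to the time average $\overline U(s)$ over $[s-\delta,s]$. The averaging is what lets $\int_0^1\nabla^2\Phi\,\dd r$ dominate the gradient. This gives $\E_\theta\overline U(s)^2\leq n^{-2}\delta^{-2}\int_{s-\delta}^s r^{-1}\E_\theta\tr(\Pi K_r^2)\dd r$. The correction term is controlled by martingale increments of $\Pi m$. The factor $\delta^{-2}$ comes from the window average together with $r^{-1}\leq\delta^{-1}$, not from converting between $V_s$ and $K_s$ twice.
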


\begin{lem}
\label{lem:fluctuation-part}
For every $0<\delta<1/4$,
\[
 \int_\delta^1\E_\theta
 |\Delta_s-\E_\theta[\Delta_s\,|\,\mathcal G_s]|^2\dd s
 \leq\frac{4D}{\sqrt{n\varepsilon\delta}}.
\]
\end{lem}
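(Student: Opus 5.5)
The plan is to compute the conditional variance of $\Delta_s$ given $\mathcal G_s$ explicitly, bound it through $\Pi V_s\Pi$, and then use \eqref{eq:covariance-energy} to turn $V_s$ into $K_s$. Conditionally on $\mathcal G_s$, the replicas $\sigma^1_s,\sigma^2_s$ are independent with mean $m_s$ and covariance $V_s$. Each entry $(a,b)\in\{1,\dots,D\}^2$ of $\Delta_s$ is a bilinear form
\[
 (\Delta_s)_{ab}=(\sigma_s^1)^\intercal M_{ab}\,\sigma_s^2 ,
\]
where the vectors are stacked in $\R^{Dkn}$ and $M_{ab}=n^{-1}\,e_ae_b^\intercal\otimes J$. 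Here $J$ is diagonal on the sites, with entries $+1$ on block $1$, $-1$ on block $2$, and $0$ elsewhere. So $M_{ab}=\Pi M_{ab}\Pi$ and $\|M_{ab}\|_{\mathrm{op}}=n^{-1}$.

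For independent vectors with common mean $m$ and covariance $V$, a direct expansion gives
\[
 \Var(x^\intercal My)=\tr(MVM^\intercal V)+m^\intercal MVM^\intercal m+m^\intercal M^\intercal VMm .
\]
I will bound the three terms as follows, writing $A_s=\Pi V_s\Pi\succeq0$.
\begin{itemize}
\item The trace term is at most $n^{-2}\tr(A_s^2)\le n^{-2}\|A_s\|_{\mathrm{op}}\tr A_s$.
\item Since every column has norm at most $1$, $\tr A_s\le\E[|\Pi\sigma|^2\,|\,\mathcal G_s]\le 2n$.
\item Similarly $|M_{ab}^\intercal m_s|^2\le n^{-2}\cdot 2n$, so each mean term is at most $2n^{-1}\|A_s\|_{\mathrm{op}}$.
\end{itemize}
Hence every entry has conditional variance at most $C n^{-1}\|A_s\|_{\mathrm{op}}$. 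Summing over the $D^2$ entries, I expect that organizing the sum as a trace against $I_D$ gives a factor of order $D$ rather than $D^2$. I will aim for $4D\,n^{-1}\|A_s\|_{\mathrm{op}}$ in total, and adjust the bookkeeping if needed.

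Next I compare $A_s$ with the Hessian. By \eqref{eq:covariance-energy}, $sV_s\preceq K_s$, and conjugating by $\Pi$ preserves this order, so
\[
 \|A_s\|_{\mathrm{op}}\le s^{-1}\|\Pi K_s\Pi\|_{\mathrm{op}}\le s^{-1}\bigl(\tr(\Pi K_s\Pi K_s)\bigr)^{1/2}\le s^{-1}\bigl(\tr(\Pi K_s^2)\bigr)^{1/2},
\]
using $K_s\succeq0$ (convexity of $\Phi$) and $\tr(\Pi K\Pi K)\le\tr(\Pi K^2)$. I deliberately avoid squaring the order $sV_s\preceq K_s$, since $V\mapsto V^2$ is not operator monotone. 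This is the step I expect to need the most care, and passing through the operator norm is precisely what makes it work.

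Finally I integrate over $[\delta,1]$. By Jensen and Cauchy--Schwarz in $s$,
\[
 \int_\delta^1 \frac{\E_\theta\bigl(\tr(\Pi K_s^2)\bigr)^{1/2}}{s}\,\dd s
 \le\Bigl(\int_\delta^1 s^{-2}\,\dd s\Bigr)^{1/2}\Bigl(\int_0^1\E_\theta\tr(\Pi K_s^2)\,\dd s\Bigr)^{1/2}
 \le\delta^{-1/2}\Bigl(\frac n\varepsilon\Bigr)^{1/2},
\]
where the last step uses the energy bound in \eqref{eq:covariance-energy}. Multiplying by the prefactor $4D\,n^{-1}$ gives
\[
 \int_\delta^1\E_\theta\bigl|\Delta_s-\E_\theta[\Delta_s\,|\,\mathcal G_s]\bigr|^2\dd s\le\frac{4D}{\sqrt{n\varepsilon\delta}},
\]
which is the bound in Lemma~\ref{lem:fluctuation-part}.
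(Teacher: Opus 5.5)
Your proposal is correct and is essentially the paper's argument. The paper likewise bounds the conditional variance by quadratic forms in $V_s$ supported on the first two blocks, uses $sV_s\preceq K_s$ together with $\|\Pi K_s\Pi\|_{\mathrm{op}}\leq(\tr(\Pi K_s^2))^{1/2}$, and finishes with Cauchy--Schwarz in $s$ and the energy bound. The only difference is how it reaches those quadratic forms: it uses the law of total variance and Jensen, freezing one replica so that each entry of $\Delta_s$ becomes a linear functional $v\cdot\sigma$, rather than your exact variance formula for a bilinear form.

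The one loose end is the constant $4D$, which your per-entry bounds do not give; they only give $6D^2n^{-1}\|A_s\|_{\mathrm{op}}$. To get $4D$, sum over $(a,b)$ before bounding, using two identities:
\begin{itemize}
\item $\sum_{a,b}|M_{ab}m_s|^2=\sum_{a,b}|M_{ab}^\intercal m_s|^2=Dn^{-2}|\Pi m_s|^2$;
\item $\sum_{a,b}\tr(M_{ab}V_sM_{ab}^\intercal V_s)\leq Dn^{-2}\|A_s\|_{\mathrm{op}}\tr A_s$, obtained by conditioning on one centered replica.
\end{itemize}
Then use the joint bound $\tr A_s+|\Pi m_s|^2=\E[|\Pi\sigma|^2\,|\,\mathcal G_s]\leq2n$. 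This gives $Dn^{-2}\|A_s\|_{\mathrm{op}}(\tr A_s+2|\Pi m_s|^2)\leq4Dn^{-1}\|A_s\|_{\mathrm{op}}$, exactly the paper's constant.
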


On $[0,\delta]$, the second term in \eqref{eq:overlap-decomposition} is
at most $4\delta$, since $|\Delta_s|\leq2$. The two lemmas
and
\[
 (n\varepsilon\delta)^{-1/2}
 \leq\tfrac12\left(\delta+(n\varepsilon\delta^2)^{-1}\right)
\]
therefore give
\[
 \int_0^1\E_\theta|\Delta_s|^2\dd s
 \leq C_D\left(\delta+\frac1{n\varepsilon\delta^2}\right).
\]
We take $\delta=(n\varepsilon)^{-1/3}$ if this is less than $1/4$;
otherwise we use $|\Delta_s|\leq2$. This proves the
proposition. We prove Lemma~\ref{lem:fluctuation-part} first.

\subsection{Proof of Lemma~\ref{lem:fluctuation-part}}
\label{sec:proof-fluctuation}

Fix $s>0$. Conditionally on $\mathcal G_s$, the replicas
$\sigma_s^1,\sigma_s^2$ are independent. Fix an entry
$\Delta_{s,ij}$ of $\Delta_s$. By conditional independence, the
law of total variance and Jensen's inequality give
\[
\begin{aligned}
 \Var_\theta(\Delta_{s,ij} \,|\,\mathcal G_s)
 &=\E_\theta\bigl[
       \Var_\theta(\Delta_{s,ij} \,|\,\mathcal G_s,\sigma_s^1)
       \,|\,\mathcal G_s\bigr]
   +\Var_\theta\bigl(
       \E_\theta[\Delta_{s,ij} \,|\,\mathcal G_s,\sigma_s^1]
       \,|\,\mathcal G_s\bigr)\\
 &\leq\E_\theta\bigl[
       \Var_\theta(\Delta_{s,ij} \,|\,\mathcal G_s,\sigma_s^1)
       +\Var_\theta(\Delta_{s,ij} \,|\,\mathcal G_s,\sigma_s^2)
       \,|\,\mathcal G_s\bigr].
\end{aligned}
\]

Fix $\sigma_s^1=\tau$. Then $\Delta_{s,ij}$ is a linear function of $\sigma_s^2$:
the coefficient of the $j$-th coordinate of site $p$ is $\tau_{i,p}/n$
if $p$ is in the first block, $-\tau_{i,p}/n$ if $p$ is in the second
block, and $0$ otherwise. For a coefficient vector $v$ supported on the
first two blocks, \eqref{eq:covariance-energy} gives
\[
 \Var_\theta(v\cdot\sigma\,|\,\mathcal G_s)
 \leq\frac1s v^\intercal K_sv
 \leq\frac{|v|^2}{s}\sqrt{\tr(\Pi K_s^2)}.
\]
The squared norms of the coefficient vectors, summed over $i,j$, equal
$Dn^{-2}|\Pi\tau|^2\leq2D/n$. The same bound holds with the roles of
the replicas exchanged. Hence,
\[
 \E_\theta\bigl[
 |\Delta_s-\E_\theta[\Delta_s\,|\,\mathcal G_s]|^2\,|\,\mathcal G_s\bigr]
 \leq\frac{4D}{ns}\sqrt{\tr(\Pi K_s^2)}.
\]
Taking expectations and applying the Cauchy--Schwarz inequality yields
\[
 \int_\delta^1\E_\theta
 |\Delta_s-\E_\theta[\Delta_s\,|\,\mathcal G_s]|^2\dd s
 \leq\frac{4D}{n}
 \left(\int_\delta^1\E_\theta\tr(\Pi K_s^2)\dd s\right)^{1/2}
 \left(\int_\delta^1s^{-2}\dd s\right)^{1/2}
 \leq\frac{4D}{\sqrt{n\varepsilon\delta}},
\]
as required.

\subsection{Proof of Lemma~\ref{lem:mean-part}}
\label{sec:proof-mean}

Fix $0<\delta<1/4$. By conditional independence,
\[
 \E_\theta[\Delta_s\,|\,\mathcal G_s]=\frac1n\bigl(m_s^{(1)}(m_s^{(1)})^\intercal
                 -m_s^{(2)}(m_s^{(2)})^\intercal\bigr).
\]
In particular $\E_\theta[\Delta_s\,|\,\mathcal G_s]$ is symmetric, and it suffices to bound
$\int_0^1\E_\theta|\E_\theta[a\cdot\Delta_s\,|\,\mathcal G_s]|^2\dd s$ uniformly over $a\in S^D$
with $|a|=1$.

Let $T_a$ act as $a$ on each site of the first block, as $-a$ on each
site of the second block, and as zero elsewhere. Then $T_a^2\preceq\Pi$
and, by balance, $\sigma^\intercal T_a\sigma=0$. Set
\[
 U(s)=\frac1n\left(\sigma^\intercal T_a m_s
                         -\frac12m_s^\intercal T_a m_s\right)
     =-\frac1{2n}(\sigma-m_s)^\intercal T_a(\sigma-m_s),
\]
and, for $2\delta\leq s\leq1$,
$\overline U(s)=\delta^{-1}\int_{s-\delta}^sU(r)\dd r$. We shall prove
\begin{equation}
 \E_\theta\overline U(s)^2
 \leq\frac1{n^2\delta^2}\int_{s-\delta}^s
 \frac{\E_\theta\tr(\Pi K_r^2)}r\dd r.
 \label{eq:main-variance}
\end{equation}

We first show that $\overline U(s)$ is centered given $\sigma$. Fix
$\tau\in\Sigma^{\mathrm{bal}}_{kn}\cap(\supp P_1)^{kn}$. Exchanging
matching spins in its first two blocks gives a permutation $\pi$ of the
sites with $\pi\tau=\tau$ and $\pi^\intercal T_a\pi=-T_a$. The
covariance of $H_{\theta,k,n}$ and its positive square root commute
with the permutation of configurations induced by $\pi$. Hence
permuting $g$, $z_0$ and $B$ according to $\pi$ preserves the
conditional law given $\sigma=\tau$, and, by uniqueness of the Parisi
solution and of the diffusion, it maps $m_r$ to $\pi m_r$. This
changes the sign of $U(r)$. Consequently,
\[
 \E_\theta[U(r)\,|\,\sigma]=0,
 \qquad \E_\theta[\overline U(s)\,|\,\sigma]=0.
\]

We next write the conditional law given $\sigma=\tau$ as a
log-concave perturbation of a Gaussian law. Let $\Gamma$ be the
product law of $g$, $z_0$ and a standard Brownian motion $B$, and on
this space set
\[
 X_r=\sqrt{2\sfq(0)}z_0+
       \int_0^r\sqrt{2\dot\sfq(u)}\dd B_u.
\]
By It\^o's formula and the Parisi equation,
\[
 \exp\left(\Phi(1,X_1;g)-\int_0^1\Phi(r,X_r;g)\dd r\right)
\]
is the stochastic exponential of the martingale with integrand
$r\sqrt{2\dot\sfq(r)}\nabla_x\Phi(r,X_r;g)$. Since the gradient is
bounded, Novikov's condition (see, Proposition~3.5.12 in \cite{KaratzasShreve1991})
holds conditionally on $(g,z_0)$. By
Girsanov's theorem, under the tilted law the process $X$ solves
\eqref{eq:parisi-diffusion}, and the law of $(g,z_0)$ is unchanged.
Multiplying by the terminal Gibbs density cancels $\Phi(1,X_1;g)$ and
yields
\begin{equation}
 \frac{\dd\mathbb P_\theta(\,\cdot\,|\,\sigma=\tau)}{\dd\Gamma}
 \propto\exp\left(\sqrt{2t}\,H_{\theta,k,n}(\tau;g)+X_1\cdot\tau
                 -\int_0^1\Phi(r,X_r;g)\dd r\right).
 \label{eq:conditional-density}
\end{equation}
Terms depending only on $\tau$ are absorbed in the normalizing
constant. Note that $g$ is still random under this law.

Expand $B$ in a sequence of independent standard Gaussian coordinates,
and let $\mathcal F_m$ be the $\sigma$-field generated by all these
coordinates except the first $m$. Conditionally on $\mathcal F_m$ and
$\sigma=\tau$, let $\zeta$ be the vector formed by $g$, $z_0$ and the
first $m$ coordinates of $B$. Since $X_r$ is affine in $\zeta$,
\eqref{eq:conditional-density} shows that the conditional law of
$\zeta$ has a density proportional to $e^{-\mathcal V(\zeta)}$ with
\[
 \nabla^2\mathcal V
 =I+\int_0^1\nabla_\zeta^2[\Phi(r,X_r;g)]\dd r.
\]
By the joint convexity of $\Phi$ in the field and the disorder, each
integrand is positive semidefinite, so $\nabla^2\mathcal V\succeq I$.

With $\tau$ fixed, the gradient of $U(r)$ with respect to $m_r$ is
$n^{-1}T_a(\tau-m_r)$, and $m_r=\nabla_x\Phi(r,X_r;g)$. Let $w$ be a
direction in the $\zeta$ coordinates. The Cauchy--Schwarz inequality
for the joint Hessian of $\Phi$ in $(x,g)$, applied to the direction
induced by $w$ and to the field direction $T_a(\tau-m_r)$, followed by
the Cauchy--Schwarz inequality in $r$, gives
\[
 |w\cdot\nabla_\zeta\overline U(s)|^2
 \leq\frac{w^\intercal\nabla^2\mathcal V\,w}{n^2\delta^2}
 \int_{s-\delta}^s
 (\tau-m_r)^\intercal T_aK_rT_a(\tau-m_r)\dd r.
\]
Taking $w=(\nabla^2\mathcal V)^{-1}\nabla_\zeta\overline U(s)$ and
applying the Brascamp--Lieb inequality
(see, Theorem~4.1 in \cite{BrascampLieb1976}), we find that the conditional
variance of $\overline U(s)$ given $\mathcal F_m$ and $\sigma=\tau$ is
at most the conditional expectation of the integral on the right,
divided by $n^2\delta^2$. Since the derivatives of $\Phi$ in $(x,g)$
are bounded, $\overline U(s)$ is a smooth function of $\zeta$ and these
computations are justified.

We now remove the conditioning on $\mathcal F_m$. The density in
\eqref{eq:conditional-density} is positive, so the conditional law
given $\sigma=\tau$ is equivalent to $\Gamma$. The $\sigma$-fields
$\mathcal F_m$ decrease to the tail $\sigma$-field of the coordinates
of $B$, which is trivial under $\Gamma$ by Kolmogorov's zero-one law,
and hence also under the conditional law. By the backward martingale
convergence theorem (see, Theorem~4.7.3 in \cite{DurrettPTE}), applied to the
bounded variable $\overline U(s)$, the conditional expectation
$\E_\theta[\overline U(s)\,|\,\mathcal F_m,\sigma=\tau]$ converges to
$\E_\theta[\overline U(s)\,|\,\sigma=\tau]$ almost surely and in $L^2$.
Since $\Var=\E\Var(\cdot\,|\,\mathcal F_m)+\Var\E[\cdot\,|\,\mathcal F_m]$,
the variance bound holds conditionally on $\sigma=\tau$ alone. Since
$\E_\theta[\overline U(s)\,|\,\sigma]=0$, averaging over $\tau$ gives
\[
 \E_\theta\overline U(s)^2
 \leq\frac1{n^2\delta^2}\int_{s-\delta}^s
 \E_\theta\left[
 (\sigma-m_r)^\intercal T_aK_rT_a(\sigma-m_r)
 \right]\dd r.
\]
Conditioning the integrand on $\mathcal G_r$, we obtain
\[
\begin{aligned}
 \E_\theta\left[
 (\sigma-m_r)^\intercal T_aK_rT_a(\sigma-m_r)
 \,|\,\mathcal G_r\right]
 &=\tr\left(K_rT_aV_rT_a\right)\\
 &\leq\frac1r\tr(K_rT_aK_rT_a)
 \leq\frac1r\tr(\Pi K_r^2).
\end{aligned}
\]
The first inequality follows from \eqref{eq:covariance-energy}, and the
second from the Cauchy--Schwarz inequality for the Frobenius inner
product and $T_a^2\preceq\Pi$. This proves \eqref{eq:main-variance}.
Integrating it in $s$ gives
\begin{equation}
 \int_{2\delta}^1\E_\theta\overline U(s)^2\dd s
 \leq\frac1{n^2\delta}\int_\delta^1
 \frac{\E_\theta\tr(\Pi K_r^2)}r\dd r
 \leq\frac1{n\varepsilon\delta^2}.
 \label{eq:averaged-bound}
\end{equation}

It remains to estimate $\E_\theta[\Delta_s\,|\,\mathcal G_s]$. For $r\leq s$, the variable $m_r$ is
$\mathcal G_s$-measurable and $\E_\theta[\sigma\,|\,\mathcal G_s]=m_s$,
so
\begin{equation}
 2\E_\theta[U(r)\,|\,\mathcal G_s]
 =\E_\theta[a\cdot\Delta_s\,|\,\mathcal G_s]-\frac1n(m_s-m_r)^\intercal T_a(m_s-m_r).
 \label{eq:observable-mean}
\end{equation}
We average over $r\in[s-\delta,s]$ and multiply by $\E_\theta[a\cdot\Delta_s\,|\,\mathcal G_s]$.
Young's inequality, $|\E_\theta[a\cdot\Delta_s\,|\,\mathcal G_s]|\leq2$ and
$|v^\intercal T_av|\leq|\Pi v|^2$ give
\[
 \E_\theta|\E_\theta[a\cdot\Delta_s\,|\,\mathcal G_s]|^2
 \leq4\E_\theta\overline U(s)^2
 +\frac4{n\delta}\int_{s-\delta}^s
 \E_\theta|\Pi(m_s-m_r)|^2\dd r.
\]
Under $\mathbb P_\theta$, the process $m$ is a martingale, so
$s\mapsto\E_\theta|\Pi m_s|^2$ is increasing and bounded by $2n$.
Hence
\[
\begin{gathered}
 \E_\theta|\Pi(m_s-m_r)|^2
 =\E_\theta|\Pi m_s|^2-\E_\theta|\Pi m_r|^2,
 \qquad r\leq s,\\
 \int_{2\delta}^1\int_{s-\delta}^s
 \E_\theta|\Pi(m_s-m_r)|^2\dd r\dd s
 \leq n\delta^2.
\end{gathered}
\]
Together with \eqref{eq:averaged-bound} and the bound
$|\E_\theta[a\cdot\Delta_s\,|\,\mathcal G_s]|\leq2$ on $[0,2\delta]$, this yields
\[
 \int_0^1\E_\theta|\E_\theta[a\cdot\Delta_s\,|\,\mathcal G_s]|^2\dd s
 \leq12\delta+\frac4{n\varepsilon\delta^2}.
\]
Summing over an orthonormal basis of $S^D$ proves the lemma.

\printbibliography

\end{document}